\titleformat{\subsection}[hang]
  {\normalfont\bfseries}{\thesubsection}{1em}{}
      \newtheorem{theorem}{Theorem}
      \newtheorem{lemma}[theorem]{Lemma}
      \newtheorem{claim}[theorem]{Claim}
\def\ex{{\rm{ex}}}
\def\ttt{{s}}
\def\SSS{{t}}
\def\BB{{\mathcal B}}
\def\EE{{\mathcal E}}
\def\FF{{\mathcal F}}
\def\GG{{\mathcal G}}
\def\Hh{{\mathcal H}}
\def\HH{\widehat{\mathcal H}}
\def\JJ{{\mathcal J}}
\def\PP{{\mathcal P}}
\def\TT{{\mathcal T}}
\def\UU{{\mathcal U}}
\begin{document}

\pagestyle{myheadings}
\markright{{\small{{\sc F\"uredi and Kostochka:   Bushes}. \enskip December 6, 2023}}}

\title{\vspace{-0.5in} Tur\' an number for bushes}

\author{
{\large{Zolt\'an F\"uredi}}\thanks{
\footnotesize {Alfr\'ed R\'enyi Institute of Mathematics, Budapest, Hungary.
E-mail:  \texttt{z-furedi@illinois.edu}.
Research partially supported by National Research, Development and Innovation Office NKFIH grants  132696 and 133819.}}
\and
{\large{Alexandr Kostochka}}\thanks{
\footnotesize {University of Illinois at Urbana-Champaign, Urbana, IL 61801
 and Sobolev Institute of Mathematics, Novosibirsk 630090, Russia. E-mail: \texttt {kostochk@illinois.edu}.
Research supported in part by NSF
grant DMS-2153507 and NSF RTG grant DMS-1937241.
}}
}

\date{}    
\maketitle
\vspace{-0.3in}

\begin{abstract}
Let $ a,b \in {\bf Z}^+$,
 $r=a + b$, and let $T$ be a tree
  with parts $U = \{u_1,u_2,\dots,u_s\}$
and $V = \{v_1,v_2,\dots,v_t\}$.
Let $U_1, \dots ,U_s$ and $V_1, \dots, V_t$ be  disjoint sets, such that  {$|U_i|=a$  and $|V_j|=b$ for all $i,j$}.
The  {\em  $(a,b)$-blowup} of $T$  is the
$r$-uniform hypergraph  with edge set
$ {\{U_i \cup V_j :
u_iv_j \in E(T)\}.}$

We use the $\Delta$-systems method to prove the following Tur\' an-type result.
Suppose  $a,b,\ttt \in {\bf Z}^+$,  $r=a+b\geq 3$,{ $a\geq 2$,} and
 $T$ is a fixed tree of diameter $4$ in which the degree of the center vertex is $\ttt $.
Then there exists a $C=C(r,\ttt ,T)>0$ such that
$ |\mathcal{H}|\leq  (\ttt -1){n\choose r-1} +Cn^{r-2}$ for every
 $n$-vertex $r$-uniform hypergraph  $\mathcal{H}$  {not containing
 an $(a,b)$-blowup of $T$}. This is {asymptotically exact} when $\ttt \leq |V(T)|/2$.
  A stability result is also presented.

\medskip\noindent
{\bf{Mathematics Subject Classification:}} 05D05, 05C65,  05C05.\\
{\bf{Keywords:}} Hypergraph trees, extremal hypergraph theory, Delta-systems.
\end{abstract}

\section{Introduction}
\subsection{Basic definitions and notation}

 An $r$-uniform hypergraph (an {\em $r$-graph}, for short), is a family of $r$-element subsets of a finite set.
We associate an $r$-graph $\Hh$ with its edge set and call its vertex set $V(\Hh)$.
Often we take $V(\Hh)=[n]$, where  $[n]:=\{ 1, 2, 3,\dots, n\}$.
Given an $r$-graph $\FF$,
let {\em the Tur\' an number} of $\FF$, $\ex_r(n,\FF)$, denote the maximum number of edges in an $r$-graph on $n$ vertices that does not contain a copy of $\FF$.

Since
 a (graph) tree is connected and bipartite, it uniquely defines the parts in its bipartition. So, we say a tree
 $T$ is an {\em $(s,t)$-tree} if one part of $V(T)$ has $s$ vertices and the other has $t$ vertices.

Let $s,t, a,b > 0$ be integers,
 $r=a + b$, and let $T = T(U,V)$ be an $(s,t)$-tree
  with parts $U = \{u_1,u_2,\dots,u_s\}$
and $V = \{v_1,v_2,\dots,v_t\}$.
Let $U_1, \dots ,U_s$ and $V_1, \dots, V_t$ be pairwise disjoint sets, such that $|U_i|=a$  and $|V_j|=b$ for all $i,j$.
So $\left| \bigcup U_i\cup V_j \right| = as +bt $.
The {\em  $(a,b)$-blowup} of $T$, denoted by $\TT(T,a,b)$, is the
$r$-uniform hypergraph  with edge set
\[ \TT(T,a,b):= \{U_i \cup V_j :
u_iv_j \in E(T)\}.\]

\begin{figure}
\begin{center}
    \includegraphics[height=1in]{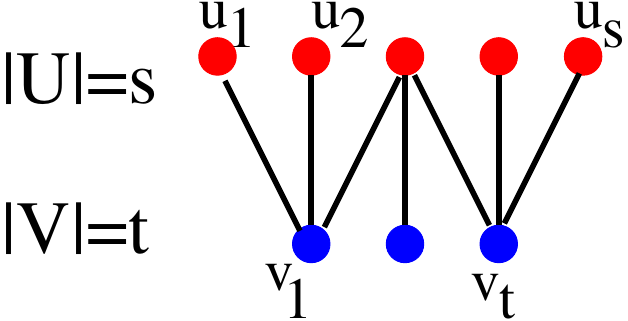} ${\bf \quad \Rightarrow\quad }$  \includegraphics[height=1.05in]{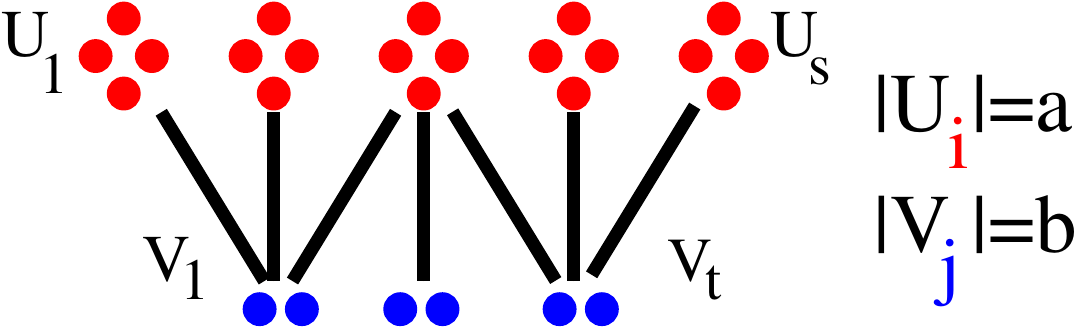}
\caption{An example of a $(4,2)$-blowup. \label{fig1}}
\end{center}
\end{figure}

The goal of this paper is to find the asymptotics of the Tur\' an number for $(a,b)$-blowups of many trees of radius $2$ using the $\Delta$-systems method.
Earlier, $(a,b)$-blowups of different classes of  trees and different pairs $(a,b)$ were considered in~\cite{FJKMV5}. The main result in~\cite{FJKMV5} is the following.

\begin{theorem}[\cite{FJKMV5}]\label{th:main1}
Suppose  $r \geq 3$, $s,t \geq 2$, $a + b = r$, $b < a < r$.
Let $T$ be an $(s,t)$-tree  and let ${\mathcal T}=\TT(T,a,b)$ be its $(a,b)$-blowup.
Then (as $n\to \infty$) any
$\mathcal{T}$-free $n$-vertex $r$-graph $\Hh$ satisfies
\[ |\Hh| \leq (t - 1){n \choose r-1} + o(n^{r-1}).\]
This is asymptotically sharp whenever $t\leq s$.
\end{theorem}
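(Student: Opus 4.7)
I would use the $\Delta$-system method of Frankl--F\"uredi. Suppose $|\Hh|>(t-1+\varepsilon)\binom{n}{r-1}$ for some fixed $\varepsilon>0$. The first step is the standard sunflower regularization: pass to a subhypergraph $\Hh^{*}\subseteq\Hh$ with $|\Hh^{*}|=\Omega(n^{r-1})$ that is \emph{homogeneous}, meaning that after a canonical ordering of the vertices in each edge there is a fixed intersection poset $\mathcal J\subseteq 2^{[r]}$ such that for every $E\in\Hh^{*}$ and every $I\in\mathcal J$, the coordinate-set $E|_I$ is the core of a $\Delta$-system inside $\Hh^{*}$ of any prescribed bounded size. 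By additionally discarding $O(n^{r-2})$ edges one may assume that all relevant link- and shadow-degrees exceed any fixed constant $M=M(r,s,t)$ wherever they appear.

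The second step is a counting argument on the top $(r-1)$-shadow. Since $|\Hh^{*}|>(t-1)\binom{n}{r-1}+O(n^{r-2})$, averaging over $(r-1)$-subsets produces at least one lying in $\ge t$ edges of $\Hh^{*}$; combined with the homogeneity this yields the key \emph{extendability} property that I will use throughout: for every bounded forbidden set $F\subseteq V(\Hh)$ and every kernel type $I\in\mathcal J$ realized by some edge of $\Hh^{*}$, one can find a $t$-sunflower in $\Hh^{*}$ with core of type $I$ whose $t$ petals are disjoint from $F$. This in particular provides both kinds of sunflower we will need: those with a $b$-set core and $a$-set petals (to realize a $V_j$-block with many $U_i$-children) and those with an $a$-set core and $b$-set petals (to realize a $U_i$-block with many $V_j$-children).

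The third step is to embed $T$ greedily. Root $T$ at an arbitrary leaf, order its edges by BFS as $e_{1},e_{2},\ldots,e_{s+t-1}$, and embed them one at a time, maintaining the invariant that every vertex already used from $U$ (respectively $V$) has been realized as a disjoint $a$-set $U_{i}$ (respectively $b$-set $V_{j}$). When extending by a new tree-edge $u_{i}v_{j}$, exactly one of $U_{i}, V_{j}$ has already been placed; the extendability property supplies a hyperedge of $\Hh^{*}$ that contains the already-placed block as part of its kernel and contributes a fresh petal, from which the new block (of size $a$ or $b$) is carved, disjoint from everything used before. The main technical obstacle is the bookkeeping needed to align the kernel-types coming from $\mathcal J$ with the $(a,b)$-split demanded by the orientation of $e_q$ in $T$, and to keep all petals pairwise disjoint as we walk along the tree; the hypothesis $b<a$ is exactly where one gets the extra flexibility to split each petal correctly between the larger old block and the smaller new block. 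The matching lower bound for $t\le s$ is the standard apex construction $\Hh_{0}=\{e\in\binom{[n]}{r}:e\cap W\neq\emptyset\}$ for $|W|=t-1$, which has $(t-1)\binom{n}{r-1}-O(n^{r-2})$ edges; a copy of $\TT$ inside $\Hh_{0}$ would force a vertex cover of $T$ of size at most $t-1$, so the construction is $\TT$-free for those $(s,t)$-trees with $\tau(T)=t$, a family rich enough to witness asymptotic sharpness whenever $t\le s$.
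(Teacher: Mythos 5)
You are proposing a proof of Theorem~\ref{th:main1}, which this paper only quotes from~\cite{FJKMV5}; there is no proof of it here to compare against, so I judge your sketch on its own terms and against the machinery the paper uses for the closely related bush case. The upper-bound part has a gap right at its hinge. A single application of Lemma~\ref{Zoll} keeps only a $c(r,q)$-fraction of $\mathcal{H}$, so $|\mathcal{H}^{*}|=\Omega(n^{r-1})$ but is nowhere near $(t-1)\binom{n}{r-1}$; your averaging step ``some $(r-1)$-set lies in at least $t$ edges of $\mathcal{H}^{*}$'' therefore has no basis, and with it falls the ``extendability property.'' Worse, that property --- the simultaneous availability, inside one homogeneous piece, of sunflowers with an $a$-set kernel and with a complementary $b$-set kernel --- is exactly what a $\mathcal{T}$-free family can avoid: the engine of the $\Delta$-system argument (Lemmas~\ref{no+b}--\ref{inter} here) is that the pattern family $\mathcal{J}$ of a large homogeneous piece contains \emph{no} disjoint $a$-pattern and $b$-pattern, whence $\mathcal{J}$ has a rigid form giving a distinguished vertex $c(E)$ in each edge, and the density hypothesis is then contradicted by a counting argument carried out \emph{across} an iterated decomposition $\mathcal{H}=\mathcal{H}_0\cup\mathcal{H}_1\cup\dots\cup\mathcal{H}_m$ (each $(r-1)$-set serves as $E\setminus\{c(E)\}$ for at most $t-1$ edges, as in Claim~\ref{distinct52}). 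Your greedy embedding in step 3 is the easy part once both kernel types coexist; showing that a family above the density threshold must contain a piece where they coexist is the actual content, and your sketch assumes it rather than proves it. (Also, $a>b$ enters through such structural lemmas on $\mathcal{J}$, not through petal-splitting bookkeeping.)

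The sharpness part is also not correct as stated. The cover construction $\{e:e\cap W\neq\emptyset\}$ with $|W|=t-1$ is $\mathcal{T}$-free if and only if $\tau(T)\ge t$, i.e., iff $T$ has a matching saturating its $t$-side; many $(s,t)$-trees with $t\le s$ fail this (a double star with $s=t$ has $\tau(T)=2$), and the theorem asserts sharpness for \emph{every} such tree, so restricting to trees with $\tau(T)=t$ does not witness the claim. A construction that works uniformly: fix $D$ with $|D|=t-1$ and take all $r$-sets meeting $D$ in exactly one vertex, giving $(t-1)\binom{n-t+1}{r-1}$ edges. In a copy of $\mathcal{T}(T,a,b)$ the tree vertices whose blocks meet $D$ would have to meet every tree edge exactly once, i.e., form an independent vertex cover of $T$ of size at most $t-1$; but in a connected bipartite graph the only independent vertex covers are the two sides, of sizes $s$ and $t$, both at least $t$. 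Note also that the paper's own example~\eqref{eq:11} is of the cover type only because a bush does have $\ttt$ disjoint edges; that justification does not transfer to general $(s,t)$-trees, which is precisely where your lower bound breaks.
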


This theorem asymptotically settles about a half of possible cases, but when $t>s$
it is expected that the asymptotic is different.
More is known on $(a,b)$-blowups of paths.

Let $P_\ell$ denote the (graph) path with $\ell$ edges.
The first edge of the path corresponds to $A_1\cup B_1$, the second edge to $B_1\cup A_2$, etc.
The case of $P_2$ was resolved asymptotically by Frankl~\cite{Frankl1977} (for $b=1$)  and by Frankl and F\" uredi~\cite{FF85} (for all $1\leq a\leq r-2$ and $b=r-a$):
\begin{equation*} 
   \ex_r(n, \TT(P_2,a,b)) = \Theta\left( n^{\max\{ a-1,b \}}\right).
   \end{equation*}
The case of $P_3$ was fully solved for large $n$ by  F\"uredi and \"Ozkahya~\cite{FurOzk}.
They showed that for fixed $1\leq a,b <r$ with $r=a+b\geq 3$ and for $n> n_0(r)$,
\begin{equation*}
   \ex_r(n,\TT( P_3,a,b)) = \binom{n-1}{r-1}.
   \end{equation*}
For longer paths, the following was proved in~\cite{FJKMV5}.

\begin{theorem}[Theorem 1 in \cite{FJKMV5}]\label{th:path1}
Let  $a+b=r$, $a,b\geq 1$ and $\ell \geq 3$. Suppose further that
(i) $\ell$ is odd, or
	(ii)  $\ell$ is even and $a>b$, or (iii)  $(\ell, a,b) = (4,1,2)$.
	Then
$$\ex_r(n,\TT(P_{\ell},a,b)) = { \left \lfloor\frac{\ell-1}{2}\right \rfloor}{n \choose r - 1}+ o(n^{r - 1}).$$
\end{theorem}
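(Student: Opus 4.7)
Fix $k=\lfloor(\ell-1)/2\rfloor$, choose $S\subseteq[n]$ with $|S|=k$, and let $\Hh_0$ be the $r$-graph of all $r$-subsets of $[n]$ meeting $S$, so $|\Hh_0|=k\binom{n-1}{r-1}+O(n^{r-2})$. In any copy of $\TT(P_\ell,a,b)$ the blow-up classes $A_1,B_1,A_2,B_2,\dots$ are pairwise disjoint, so each vertex of $S$ lies in at most one class and hence meets at most two of the $\ell$ path-edges. Covering all $\ell$ path-edges by only $k$ vertices of $S$ would therefore require $2k\ge\ell$, contradicting $\ell\in\{2k+1,2k+2\}$; hence $\Hh_0$ is $\TT(P_\ell,a,b)$-free, matching the claimed lower bound.

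\textbf{Upper bound: reduction to a homogeneous sub-family.} Assume $|\Hh|\ge k\binom{n}{r-1}+\varepsilon n^{r-1}$ for fixed $\varepsilon>0$ and large $n$. First pass to an $r$-partite sub-hypergraph $\Hh_1\subseteq\Hh$ on parts $X_1,\dots,X_r$, losing only the factor $r!/r^r$. Then iterate the sunflower lemma together with F\"uredi's kernel refinement for $r$-partite $r$-graphs to extract a homogeneous sub-family $\Hh^\star\subseteq\Hh_1$ of size $\Omega(n^{r-1})$ with the following property: there is a finite list of ``trace patterns'' such that the intersection of any two edges of $\Hh^\star$ follows one of them, and for every such pattern $K$ the edges of $\Hh^\star$ containing $K$ form an $M$-sunflower with kernel $K$, where $M$ is any prescribed constant.

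\textbf{Greedy embedding of the path.} Embed $\TT(P_\ell,a,b)$ in $\Hh^\star$ one edge at a time. Start with an edge $e_1\in\Hh^\star$ and interpret its $a$- and $b$-halves as $A_1$ and $B_1$. At step $i$ the current outermost class (an $a$- or $b$-subset of an already placed edge) is, by the homogeneity, a kernel of an $M$-sunflower in $\Hh^\star$; at most $O(\ell r\cdot n^{r-2})$ of its $M$ extensions can meet the $O(\ell r)$ vertices already used, so for $M$ a sufficiently large constant a legal extension exists, supplying the next blow-up class of the path. After $\ell$ steps we obtain the desired blow-up of $P_\ell$.

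\textbf{Main obstacle and case distinction.} The case split (i)--(iii) is exactly what lets the greedy step close. Case (ii), $\ell$ even with $a>b$, is the specialisation of Theorem~\ref{th:main1} with $t=\ell/2$, so nothing new is needed. In case (i), $\ell$ odd, the automorphism of $P_\ell$ reversing the path interchanges the two bipartition classes, so one may assume $a\ge b$ and essentially replay the Theorem~\ref{th:main1} argument; the borderline $a=b$ requires only a minor symmetric variant. Case (iii) $(\ell,a,b)=(4,1,2)$ is an exceptional small configuration, handled by a direct analysis of the few possible $\Delta$-systems arising when $\ell=4$. The excluded regime ($\ell$ even, $a\le b$, $\ell\neq 4$) is believed to have a genuinely different extremal value---a design-like construction is expected to beat $\Hh_0$---so the hypothesis is necessary, and the main technical burden is (a) bookkeeping the $o(n^{r-1})$ error across the several rounds of sunflower extraction, and (b) the ad hoc verification in case (iii) that no construction improves on $\Hh_0$.
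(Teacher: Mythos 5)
The paper does not actually prove Theorem~\ref{th:path1}; it is imported verbatim as ``Theorem 1 in \cite{FJKMV5}.'' There is therefore no in-paper proof to compare against, so I will assess your proposal on its own merits.

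Your lower-bound argument is correct and clean: with $k=\lfloor(\ell-1)/2\rfloor$ and $\Hh_0$ the $r$-sets meeting a fixed $k$-set $S$, each vertex of $S$ that lands inside a putative copy of $\TT(P_\ell,a,b)$ lies in exactly one blowup class, and since $P_\ell$ has maximum degree $2$ that class lies in at most two of the $\ell$ edges; $2k<\ell$ for both parities of $\ell$, so some edge of the copy would avoid $S$, a contradiction. Also, $|\Hh_0|=\binom{n}{r}-\binom{n-k}{r}=k\binom{n}{r-1}+O(n^{r-2})$, as claimed.

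The upper bound is where the argument does not close. Case (ii) and the sub-case $a\neq b$ of case (i) are legitimately disposed of by quoting Theorem~\ref{th:main1}, using the path-reversal isomorphism $\TT(P_\ell,a,b)\cong\TT(P_\ell,b,a)$ for odd $\ell$ to force $a\ge b$. But the two remaining configurations --- $a=b$ with $\ell$ odd, and $(\ell,a,b)=(4,1,2)$ --- are precisely the ones excluded from Theorem~\ref{th:main1} (which requires $b<a<r$), and you handle them only with phrases like ``a minor symmetric variant'' and ``a direct analysis of the few possible $\Delta$-systems.'' That is not a proof of those cases; those are the content of Theorem~\ref{th:path1} beyond what Theorem~\ref{th:main1} already gives. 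The generic sunflower-plus-greedy outline in your third paragraph cannot be the right mechanism either: as you yourself note, the statement is believed false for even $\ell\ge 6$ with $a\le b$, yet nothing in that greedy sketch uses parity of $\ell$ or the sign of $a-b$; an argument that would succeed uniformly would then succeed in the false regime as well. (Compare the explicit counterexample built on a near--Steiner system after Theorem~\ref{th:main} here, which is exactly the obstruction to such a blanket greedy step when $a$ is small.) So the case distinction (i)--(iii) has to enter at a specific technical point --- typically in ruling out certain intersection patterns $\mathcal{J}^{(k)}$ via an analogue of Lemmas~\ref{noa+b} and~\ref{inter} --- and identifying and exploiting that point is the missing core of the proof.
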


So, the situation with blowups of $P_\ell$ is not resolved for the case when $\ell\geq 4$ is even and $a\leq b$ apart from
the case $(\ell, a,b) = (4,1,2)$.

\bigskip
In this paper, we 
consider $(a,b)$-blowups of 
trees of radius $2$.

 A {\em graph bush} $B_{\ttt ,h}$ is the radius 2 tree obtained from the star $K_{1,\ttt }$ by joining each  vertex of degree one to
 $h$ new vertices. So $B_{\ttt ,h}$ has $1+\ttt  +\ttt h$ vertices. Let $\SSS=1+\ttt h$. Then $B_{\ttt ,h}$ is an $(s,t)$-tree with $\SSS >\ttt $.

Suppose that $a,b,\ttt,h$ are positive integers, $a+b=r$ and $\ttt \geq 2$.
By   $(a,b,\ttt,h)$-{\em bush}, $\BB_{\ttt  ,h}(a,b)$, we will call the $(a,b)$-blowup of $B_{\ttt  ,h}$.
This means the center vertex of  $B_{\ttt  ,h}$ is replaced by an $a$-set $A$,
its neighbors by the $b$-sets $B_1, \dots, B_\ttt $ and its second
neighbors by  $a$-sets $A_{i,j}$, $i\in [\ttt]$, $j\in [h]$.
In particular, the $(a,b)$-blowup of  the {path}  $P_4$  is the $(a,b,2,1)$-{bush} $\BB_{2,1}(a,b)$.

\subsection{New results, bushes and shadows}

Since $\BB_{\ttt  ,h}(a,b)$ has $\ttt $ disjoint edges $B_{i}\cup A_{i,1}$ for $i=1,\ldots,\ttt $, the example of the $r$-uniform hypergraph with vertex set $[n]$ in which every edge intersects the set $[\ttt -1]$ shows that
\begin{equation}\label{eq:11}
\ex_r(n,\BB_{\ttt  ,h}(a,b))\geq {n\choose r}-{n-\ttt +1\choose r}\sim (\ttt -  1){n\choose r-1}.
  \end{equation}

We will use the $\Delta$-systems approach to show that this is asymptotically correct in many cases.
For $a>b\geq 2$  the asymptotic equality follows from Theorem~\ref{th:main1}.
In this paper we deal with {\em all} cases and also present a somewhat refined result by
 considering  shadows of hypergraphs.

For an $r$-graph $\mathcal{H}$ the {\em shadow}, $\partial \mathcal{H}$, is the collection of $(r-1)$-sets that lie in some edge of $\mathcal{H}$.
 Our first main result is the following.

\begin{theorem}\label{th:main}
Suppose that $a,b,\ttt,h$ are positive integers,  $r=a+b\geq 3$.
Also suppose that in case of $(a,b)= (1,r-1)$ we have $h=1$.
Then there exists a $C=C(r,\ttt,h)>0$ such that every $n$-vertex $r$-uniform family $\mathcal{H}$
satisfying
\begin{equation}\label{eq:12}
    |\mathcal{H}|> (\ttt -  1)|\partial \mathcal{H}| +Cn^{r-2}
\end{equation}
 contains
 the bush  $\BB_{\ttt  ,h}(a,b)$.
 \end{theorem}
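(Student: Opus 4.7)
The plan is to apply the $\Delta$-system (sunflower) method of Füredi. I would fix $M = M(r, \ttt, h)$ large, e.g.\ $M \geq r\ttt h + r$, and let $C$ be chosen large in terms of $M$ and $r$.

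The first step is to apply sunflower regularization to $\mathcal{H}$, partitioning its edges into boundedly many (at most $2^{2^r}$) classes according to the ``kernel type''. Each class $\mathcal{H}_\JJ$ is indexed by a family $\JJ \subseteq 2^{[r]}$ with the property that for every edge $E \in \mathcal{H}_\JJ$, under a canonical labeling of $E$ by $[r]$, the subsets of $E$ serving as cores of $\Delta$-sunflowers of size $\geq M$ inside $\mathcal{H}_\JJ$ are exactly the subsets labeled by elements of $\JJ$, and every other proper subset of $E$ lies in fewer than some constant $M' = M'(M, r)$ edges of $\mathcal{H}_\JJ$. Since there are boundedly many types and we lose only a constant factor in passing to the largest piece $\mathcal{H}' = \mathcal{H}_\JJ$, one can arrange (choosing $C$ accordingly) that $\mathcal{H}'$ still satisfies an inequality of the form $|\mathcal{H}'| > (\ttt-1)|\partial \mathcal{H}'| + C_1 n^{r-2}$ for a constant $C_1 > 0$.

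The main structural step is to show that $\JJ$ must contain both an $a$-set $J_a$ and a disjoint $b$-set $J_b$. If no $a$-subset of any edge of $\mathcal{H}'$ were a large $\Delta$-core, every $a$-subset would have degree at most $M'$ in $\mathcal{H}'$, so $|\mathcal{H}'| \leq M' \binom{n}{a}/\binom{r}{a} = O(n^a) = O(n^{r-b})$; when $b \geq 2$ this contradicts $|\mathcal{H}'| > C_1 n^{r-2}$. A symmetric argument handles the $b$-side when $a \geq 2$. The extreme case $a = 1, b = r-1$ needs extra care: the $a$-side counting still works, but the $b$-side counting fails, and it is here that the theorem's restriction (namely $h = 1$ when $(a, b) = (1, r-1)$) enters, since the $(r-1)$-shadow bound does not provide enough slack to yield $h \geq 2$ disjoint $a$-set extensions through each $B_i$. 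A further argument on intersection patterns verifies that $J_a$ and $J_b$ can be chosen disjoint.

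Once $(J_a, J_b)$ with $J_a \cap J_b = \emptyset$ lies in $\JJ$, I would build the bush greedily. Take any $E_0 \in \mathcal{H}'$ and let $A$ be its $J_a$-labeled $a$-subset. The $M$-sunflower with core $A$ yields $\ttt$ pairwise disjoint $b$-petals $B_1, \ldots, B_\ttt$, each disjoint from $A$, with $A \cup B_i \in \mathcal{H}'$. By homogeneity, each $B_i$ is itself the $J_b$-labeled part of an edge of $\mathcal{H}'$, hence a large $\Delta$-core, so admits $M$ pairwise-disjoint $a$-set extensions inside $\mathcal{H}'$. Since $M > r\ttt h$, for each $i$ I can choose $h$ such extensions $A_{i,1}, \ldots, A_{i,h}$ avoiding every vertex used so far (at most $a + \ttt b + (i-1)ah < r\ttt h$ vertices, each killing at most one petal of the disjoint system). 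The resulting edges $\{A \cup B_i\}$ and $\{B_i \cup A_{i,j}\}$ form the bush $\BB_{\ttt, h}(a, b)$ inside $\mathcal{H}$. The hardest step will be the structural one (the third paragraph): deducing from the $(r-1)$-shadow inequality with slack $C n^{r-2}$ that $\JJ$ really contains both an $a$-set and a disjoint $b$-set, and correctly calibrating the exception $(a, b) = (1, r-1)$.
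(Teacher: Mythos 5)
Your overall plan --- $\Delta$-system regularization followed by greedily assembling $\BB_{\ttt,h}(a,b)$ from a large sunflower with $a$-kernel $A$ and $\ttt$ $b$-petals $B_1,\ldots,B_\ttt$ that are themselves large kernels --- matches the paper's framework, and your final construction step is essentially Lemma~\ref{no+b}. The genuine gap is in your second and third paragraphs. Passing to the largest homogeneous piece $\mathcal{H}'$ gives $|\mathcal{H}'|\geq|\mathcal{H}|/K$ for a constant $K$, but $|\partial\mathcal{H}'|$ can remain nearly $\binom{n}{r-1}$; for near-extremal families one then has $|\mathcal{H}'|-(\ttt-1)|\partial\mathcal{H}'|$ on the order of $-(\ttt-1)(1-1/K)\binom{n}{r-1}$, deeply negative rather than $\geq C_1 n^{r-2}$. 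No choice of constants repairs this --- the shadow inequality simply does not survive restriction to a constant fraction of the edges.

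Even granting such an inequality for a single piece, your plan to deduce that its pattern family $\JJ$ contains a disjoint $a$-set and $b$-set cannot succeed: by Lemma~\ref{noa+b}, bush-freeness forces every piece's $\JJ$ to \emph{avoid} disjoint $(a,b)$-pairs, and Lemma~\ref{inter} then pins $\JJ$ to the $\JJ^{(r-1)}$ type, which yields no bush directly. The ``further argument on intersection patterns'' you invoke to achieve disjointness is precisely what is impossible in the decisive case. What the paper actually does is a \emph{global} argument across all of the (possibly superconstantly many) structured pieces $\mathcal{H}_1,\ldots,\mathcal{H}_m$ obtained by iterative peeling: since each $\JJ(\mathcal{H}_i)$ contains $\JJ^{(r-1)}$, every $E\in\HH=\bigcup_i\mathcal{H}_i$ has a distinguished vertex $c(E)$ through which every proper subset of $E$ is a large kernel, and Claims~\ref{distinct51}, \ref{distinct52} and~\ref{distinct} show, by a cross-piece inductive construction that feeds into Lemma~\ref{no+b}, that each $(r-1)$-set $Y$ equals $E\setminus\{c(E)\}$ for at most $\ttt-1$ edges $E$ of $\HH$. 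Summing over $Y$ gives $(\ttt-1)|\partial\HH|\geq|\HH|\geq|\mathcal{H}|-Cn^{r-2}$, contradicting~\eqref{eq:12}. This cross-piece counting, together with the nontrivial separate treatments of $(a,b)=(2,2)$, $(a,b)=(r-1,1)$, and especially $(a,b)=(1,r-1)$ with $h=1$ (which needs an entirely different ``$\ttt$-normalization'' argument in its own section), is what your proposal is missing.
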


This implies that~\eqref{eq:11} is asymptotically exact in these cases as $r,\ttt,h$ are fixed and $n\to \infty$.
Note that for $(a,b)=(1,r-1)$ our proof works {only for $h=1$.}
In fact, in this case the theorem  {does not hold for $h\geq2$. }
An example is this:  $V(\Hh)=[n], A=[\ttt -1]$ and  $\Hh =\EE_1\cup\EE_2$, where
$\EE_1$ is the set of $r$-subsets of $[n]$ with  {exactly one vertex in $A$} and
$\EE_2$ is the Steiner system $S_1(n-\ttt +1,r,r-1)$ on $[n]\setminus A$.
This example has asymptotically  {$(\ttt -  1+\frac{1}{r}){n\choose r-1}$ edges} and does not contain
$\BB_{\ttt  ,2}(1,r-1)$.
By increasing $h$, we can get examples  {without $\BB_{\ttt  ,h}(1,r-1)$} that have even more edges.

McLennan~\cite{Tree} proved that in the graph case ($a=b=1$), 
$\ex_r(n, B_{\ttt  ,h})= \frac{1}{2}(\ttt +\ttt h-1)n+O(1)$. 
Vertex-disjoint unions of complete graphs $K_{\ttt  +\ttt h}$ are extremal.
For the case $\ttt =1$ the restriction~\eqref{eq:12} is too strong, $\ex_r(n, \BB_{1,h}(a,b))= O(n^{r-2})$ is known for $a\geq 2$. Even  better bounds were proved in~\cite{FF60}. So we
suppose that $\ttt \geq 2$, $r\geq 3$.

Since each tree of diameter $4$ with the degree of the center equal to $\ttt $ is a subgraph of a graph bush $B_{\ttt  ,h}$ for some $h$, Theorem~\ref{th:main}
 yields the following  more
 general result.

\begin{theorem}\label{th:main2}
Suppose  $a,b,\ttt\in {\bf Z}^+$,  $r=a+b\geq 3$ and { $a\geq 2$.}
Let $T$ be a fixed tree of diameter $4$ in which the degree of the center vertex is $\ttt $.
Then there exists a $C=C(r,\ttt,T)>0$ such that every $n$-vertex $r$-graph  $\mathcal{H}$
satisfying
\begin{equation*}  
    |\mathcal{H}|>  {(\ttt -  1){n\choose r-1} +Cn^{r-2}}
\end{equation*}
  contains
 an $(a,b)$-blowup of $T$.
The coefficient $(\ttt-1)$ is the best possible (as $n\to \infty$).
\end{theorem}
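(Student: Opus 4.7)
The plan is to deduce Theorem~\ref{th:main2} from Theorem~\ref{th:main} by observing that any tree $T$ of diameter $4$ with center degree $\ttt$ embeds as a bipartite subtree of a graph bush $B_{\ttt,h}$ for a suitable $h$, so that its $(a,b)$-blowup is contained in $\BB_{\ttt,h}(a,b)$. Since we assume $a\geq 2$, the exceptional case $(a,b)=(1,r-1)$ of Theorem~\ref{th:main} never arises, and we may apply that theorem for every $h$.

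First I would set up the structure of $T$. Let $c$ be the (unique) center, with neighbors $m_1,\dots,m_\ttt$; let $h_i\geq 0$ be the number of leaves of $T$ attached to $m_i$, and put $h:=\max_i h_i$. Because $T$ has diameter exactly $4$, at least two of the $h_i$'s are positive, so $h\geq 1$. Next, $T$ embeds into $B_{\ttt,h}$ by mapping $c$ to the center, each $m_i$ to a distinct middle vertex, and the (at most $h$) leaves attached to $m_i$ injectively into the leaves below the image of $m_i$. This embedding preserves the bipartition, placing $\{c\}\cup\{\text{leaves of }T\}$ on the same side as the center-plus-leaves of $B_{\ttt,h}$ and $\{m_1,\dots,m_\ttt\}$ on the middle-vertex side.

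Using this bipartition-preserving embedding I would then conclude $\TT(T,a,b)\subseteq \BB_{\ttt,h}(a,b)$: choose the $a$-sets and $b$-sets of $\BB_{\ttt,h}(a,b)$ so that the ones attached to images of vertices of $T$ are exactly the blowup sets of $\TT(T,a,b)$, and observe that every edge of $\TT(T,a,b)$ appears among the edges of $\BB_{\ttt,h}(a,b)$ indexed by the edges of $T$ inside $B_{\ttt,h}$. Since $|\partial \mathcal{H}|\leq \binom{n}{r-1}$, the hypothesis
\[
|\mathcal{H}|>(\ttt-1)\binom{n}{r-1}+Cn^{r-2}
\]
implies $|\mathcal{H}|>(\ttt-1)|\partial \mathcal{H}|+Cn^{r-2}$, and Theorem~\ref{th:main}, applied with the constant $C=C(r,\ttt,h)$ (so that $C$ depends on $T$ through $h$), produces a copy of $\BB_{\ttt,h}(a,b)$ inside $\mathcal{H}$, which contains the desired $(a,b)$-blowup of $T$.

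For sharpness, the construction in~\eqref{eq:11}---all $r$-subsets of $[n]$ meeting a fixed $(\ttt-1)$-set---avoids $\BB_{\ttt,h}(a,b)$, and since the bush $B_{\ttt,h}$ is itself a tree of diameter $4$ with center of degree $\ttt$, this shows that the leading coefficient $\ttt-1$ cannot be improved in Theorem~\ref{th:main2}. There is no real obstacle here: the argument is essentially a monotonicity reduction to Theorem~\ref{th:main}, and the only step requiring attention is verifying that the subtree embedding $T\hookrightarrow B_{\ttt,h}$ respects the bipartition, so that the $(a,b)$-blowup of $T$ genuinely sits inside the $(a,b)$-blowup of $B_{\ttt,h}$.
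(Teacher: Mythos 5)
Your proof is correct and takes essentially the same approach as the paper, which derives Theorem~\ref{th:main2} in one sentence from the observation that every tree of diameter $4$ with center degree $\ttt$ is a subtree of some bush $B_{\ttt,h}$, combined with $|\partial\mathcal H|\leq\binom{n}{r-1}$; you have merely made the bipartition-preserving embedding and the choice of $h$ explicit, and correctly noted that $a\geq 2$ rules out the exceptional case $(a,b)=(1,r-1)$ of Theorem~\ref{th:main}. One small caveat worth flagging: your sharpness remark establishes that the coefficient $\ttt-1$ cannot be lowered \emph{for the class} of diameter-$4$ trees with center degree $\ttt$ (via $T=B_{\ttt,h}$); for an arbitrary such $T$, the $(\ttt-1)$-set construction need not avoid $\TT(T,a,b)$ (e.g., when fewer than $\ttt-1$ middle vertices of $T$ carry leaves), which matches the abstract's restriction ``asymptotically exact when $\ttt\leq|V(T)|/2$.''
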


We also use the $\Delta$-systems approach to show that for $a,b\geq 2$, $a+b=r$, an $r$-uniform hypergraphs without $\BB_{\ttt  ,h}(a,b)$ of cardinality "close" to extremal contains vertices of "large" degrees.

\begin{theorem}\label{th:stab}
Suppose that $a,b,\ttt,h$ are positive integers, $a,b\geq 2$ and  $r=a+b\geq 5$.
Then for any $C_0>0$
there exist $n_0>0$ and $C_1>0$  such that the following holds.
If  $n>n_0$, $\mathcal{H}$ is an $n$-vertex $r$-uniform family not containing
 a bush  $\BB_{\ttt  ,h}(a,b)$ and $ |\mathcal{H}|> (\ttt -  1){n\choose {r-1}}-C_0 n^{r-2}$,  then
there are $\ttt -1$ vertices in $[n]$ each of which is contained in at least ${n\choose {r-1}}-C_1 n^{r-2}$ edges of $\Hh$.
\end{theorem}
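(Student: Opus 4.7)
The plan is to argue by contradiction. Suppose at most $\ttt - 2$ vertices of $\mathcal{H}$ have degree at least $\binom{n}{r-1} - C_1 n^{r-2}$, and exhibit a copy of $\BB_{\ttt, h}(a,b)$ in $\mathcal{H}$. The approach is to revisit the $\Delta$-systems analysis underlying Theorem~\ref{th:main} and extract stability information from it.

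First, applying Theorem~\ref{th:main} to $\mathcal{H}$ gives $|\mathcal{H}| \leq (\ttt - 1)|\partial \mathcal{H}| + Cn^{r-2}$, which combined with the hypothesis on $|\mathcal{H}|$ yields $|\partial \mathcal{H}| \geq \binom{n}{r-1} - C' n^{r-2}$ for some constant $C' = C'(C_0, \ttt, r, h)$; in other words, almost every $(r-1)$-subset of $[n]$ lies in some edge. Next, apply the standard Frankl-type $\Delta$-systems reduction to extract $\mathcal{H}^* \subseteq \mathcal{H}$ with $|\mathcal{H}^*| \geq c_r |\mathcal{H}|$ enjoying the canonical sunflower structure: for every $E \in \mathcal{H}^*$ and every $I \subsetneq E$, either $I$ is the kernel of an $M$-sunflower in $\mathcal{H}^*$ (we then call $I$ \emph{heavy}), where $M = M(r, \ttt, h)$ is a constant chosen large enough for the subsequent bush construction, or $I$ admits no non-trivial sunflower at all.

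The central structural claim, which is at the heart of the proof of Theorem~\ref{th:main}, is that in a $\BB_{\ttt, h}(a,b)$-free $\mathcal{H}^*$ no heavy $a$-set $A$ can be the kernel of a sunflower whose $\ttt$ petals $B_1, \dots, B_{\ttt}$ are all heavy $b$-sets: were this the case, each $B_i$ would itself host a large sunflower with disjoint petals, and a greedy selection of $h$ disjoint $a$-sets $A_{i,j}$ for every $i$ (each time avoiding the previously used vertices) would assemble the bush, since the disjointness budget is $O(1)$ while $M$ can be taken arbitrarily large. In the stability regime, the converse of this restriction carries over with an explicit quantitative cost: the heavy $b$-sets that can serve as petals of a heavy $a$-sunflower must be covered by a set $W$ of at most $\ttt - 1$ vertices, up to an error depending on $C_0$. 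A short double-counting then yields that every edge of $\mathcal{H}$ meets $W$ except for at most $C_1 n^{r-2}$ edges.

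Once the covering set $W = \{v_1, \dots, v_{\ttt - 1}\}$ is identified, the hypothesis on $|\mathcal{H}|$ combined with $|\mathcal{H}| \leq \sum_{i=1}^{\ttt-1} d_\mathcal{H}(v_i) + C_1 n^{r-2}$ and $d_\mathcal{H}(v_i) \leq \binom{n-1}{r-1}$ forces each $d_\mathcal{H}(v_i) \geq \binom{n}{r-1} - C_1' n^{r-2}$ after inclusion--exclusion, completing the proof (with $C_1$ relabeled). The main obstacle is exactly the stability version of the structural claim above: converting the qualitative statement ``no heavy $a$-set hosts $\ttt$ heavy petals'' into the quantitative covering ``at most $O(n^{r-2})$ edges escape $W$'', with explicit control of how $C_1$ depends on $C_0$. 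This forces one to propagate the $\BB_{\ttt, h}(a,b)$-free condition through all the heavy kernel selections in the $\Delta$-systems analysis and to keep track of the sunflower multiplicities at each stage, rather than treating Theorem~\ref{th:main} as a black box. The assumptions $a, b \geq 2$ and $r \geq 5$ enter here: they ensure the $a$-set and $b$-set sunflower selections both carry enough freedom for the greedy bush construction to succeed whenever $\ttt$ independent petal directions are available.
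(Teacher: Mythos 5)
There is a genuine gap: the step you yourself flag as ``the main obstacle'' --- passing from the qualitative fact that no heavy $a$-set can have $\ttt$ heavy $b$-petals to the quantitative assertion that the relevant heavy $b$-sets are ``covered by a set $W$ of at most $\ttt-1$ vertices, up to an error depending on $C_0$'', after which all but $O(n^{r-2})$ edges meet $W$ --- is never argued, and it is precisely the content of the theorem. The structural lemmas (your heavy-kernel claim is essentially Lemma~\ref{no+b} combined with Lemma~\ref{noa+b}) only give \emph{local} information: for each fixed set $Y$ with $a\leq |Y|\leq r-1$, at most $\ttt-1$ vertices can play the role of the distinguished element for edges containing $Y$ (this is Claim~\ref{distinct52} in the paper). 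Nothing in your sketch converts this per-$Y$ bound into the existence of a single global $(\ttt-1)$-vertex set $W$ hit by almost all edges; a priori the $\ttt-1$ ``allowed'' vertices could vary wildly with $Y$, and your double-counting is not specified. Your final step (from the covering $W$ to each $v_i$ having degree $\binom{n}{r-1}-O(n^{r-2})$) is fine, but it rests entirely on the unproved covering claim. The preliminary observation that $|\partial\mathcal H|\geq\binom{n}{r-1}-C'n^{r-2}$ via Theorem~\ref{th:main} is correct but is never used afterwards.

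For comparison, the paper's mechanism for globalizing the local bound is quite different and is the substantive part of the proof: it partitions $\mathcal H$ into structured pieces $\mathcal H_1,\dots,\mathcal H_m$ as in Subsection~\ref{41}, uses that each $\mathcal J(\mathcal H_i)$ contains $\mathcal J^{(r-1)}$ (this is where $a,b\geq 2$, $r\geq 5$ enter, via Lemma~\ref{inter}) to assign to every edge $E$ a vertex $c(E)$, forms the link families $\GG(v)=\{E\setminus\{v\}: c(E)=v\}$, and shows via Claims~\ref{distinct51} and~\ref{distinct52} that every $(r-2)$-set lies in the shadow of at most $\ttt-1$ of the families $\GG(v)$. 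The concentration on $\ttt-1$ vertices is then extracted analytically: writing $|\GG(v)|=\binom{x_v}{r-1}$, the Lov\'asz form of Kruskal--Katona (Theorem~\ref{th:LKK}) bounds $\sum_v\binom{x_v}{r-2}\leq(\ttt-1)\binom{n}{r-2}$ while $\sum_v\binom{x_v}{r-1}$ is large, and a convexity computation (Lemma~\ref{d53}) forces $x_1,\dots,x_{\ttt-1}>n-b$ with $b=3(r-1)!(C+C_0)$, i.e.\ $\ttt-1$ vertices of near-full degree. To complete your argument you would need either to reproduce this Kruskal--Katona/convexity step or to supply an independent proof of your covering claim; as written, the proposal is a plan that identifies the difficulty rather than a proof.
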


The structure of this paper is as follows. In the next section, we discuss the $\Delta$-system method and present a lemma
by F\" uredi~\cite{Furedi1} from 1983 that will be our main tool. In Section~\ref{lemmas} we describe properties of so called intersection structures. It allows us to prove the main case of Theorem~\ref{th:main} (the case $a\geq 2$) in
Section~\ref{maint} and the case of $a=1$ and $h=1$ in
Section~\ref{s5}. In
Section~\ref{stabi} we prove Theorem~\ref{th:stab}.

\section{Definitions for the $\Delta$-system method and a lemma}

A family of sets $\{F_1,\ldots,F_q\}$ is a {\em $q$-star} or a {\em $\Delta$-system} or a {\em $q$-sunflower 
 with kernel $A$}, if
$F_i\cap F_j=A$ for all $1\leq i<j\leq q$. The sets $F_i\setminus A$ are called {\em petals}.

For a member $F$ of a family $\mathcal{F}$, let the {\em intersection structure of $F$ relative to $\mathcal{F}$} be
$$ \mathcal{I}(F,\mathcal{F})=\{F\cap F': F'\in \mathcal{F}\setminus \{F\}\}.$$

An $r$-uniform family $\mathcal{F}\subseteq {[n]\choose r}$ is $r$-{\em partite} if there exists a partition
$(X_1,\ldots,X_r)$     of the vertex set $[n]$ such that $|F\cap X_i|=1$ for each
$F\in \mathcal{F}$ and each $i\in [r]$.
For a partition $(X_1,\ldots,X_r)$ of $[n]$ and a set $S\subseteq [n]$, the {\em pattern} $\Pi(S)$ is the set
$\{i\in [r]: S\cap X_i\neq \emptyset\}$. Naturally, for a family $\mathcal{L}$ of subsets of $[n]$,
$$\Pi(\mathcal{L})=\{\Pi(S):S\in \mathcal{L}\}\subseteq 2^{[r]}.$$

\begin{lemma}[The intersection semilattice lemma (F\" uredi~\cite{Furedi1})] \label{Zoll}
For any positive integers $q$ and $r$, there exists a positive constant $c(r,q)$ such that every
family $\mathcal{F}\subseteq {[n]\choose r}$ contains a subfamily $\mathcal{F}^*\subseteq \mathcal{F}$ satisfying

1.  $|\mathcal{F}^*|\geq c(r,q)|\mathcal{F}|$.

2. $\mathcal{F}^*$  is $r$-partite, together with an $r$-partition $(X_1,\ldots,X_r)$.

3. There exists a family $\mathcal{J}$ of proper subsets of  $[r]$ such that  $\Pi(\mathcal{I}(F,\mathcal{F}^*))=\mathcal{J}$
holds for all $F\in \mathcal{F}^*$.

4. $\mathcal{J}$ is closed under intersection, i.e., for all $A,B\in \mathcal{J}$ we have $A\cap B\in  \mathcal{J}$, as
well.

5. For any $F\in \mathcal{F}^*$ and each $A\in \mathcal{I}(F,\mathcal{F}^*)$, there is a $q$-star in $\mathcal{F}^*$
containing $F$ with kernel $A$.
\end{lemma}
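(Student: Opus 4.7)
The plan is to produce $\mathcal{F}^*$ by a bounded sequence of refinements, each losing only a multiplicative constant depending on $r$ and $q$. Two classical tools suffice: a random partition argument to achieve $r$-partiteness, and the Erd\H{o}s sunflower lemma to extract $q$-stars.

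First I would secure property 2 by averaging: color $[n]$ uniformly at random with $r$ colors; each $r$-set $F$ survives (i.e., receives all $r$ colors) with probability $r!/r^r$, so some deterministic coloring gives an $r$-partite subfamily $\mathcal{F}_0 \subseteq \mathcal{F}$ with $|\mathcal{F}_0| \ge (r!/r^r)|\mathcal{F}|$ and a canonical $r$-partition $(X_1,\ldots,X_r)$. Next, for $F \in \mathcal{F}_0$ the pattern set $\Pi(\mathcal{I}(F,\mathcal{F}_0))$ is a subset of $2^{[r]}\setminus\{[r]\}$, so takes at most $2^{2^r}$ distinct values; a pigeonhole pass to the largest level set gives property 3 with a common $\mathcal{J}$. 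However, restricting the family can only shrink some $\Pi(\mathcal{I}(F,\cdot))$, so I would iterate this uniformization step until $\mathcal{J}$ stabilizes. Since $|\mathcal{J}|\le 2^r$ is monotone non-increasing across rounds, the loop terminates in at most $2^r$ rounds, costing only a constant factor overall.

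Then I would enforce property 5 via the Erd\H{o}s sunflower lemma, which guarantees a $q$-sunflower inside any collection of more than $r!(q-1)^r$ $r$-sets. Fix a threshold $M = M(r,q)$. For each $F \in \mathcal{F}^*$ and each $A \in \mathcal{I}(F,\mathcal{F}^*)$, either there are at least $M$ edges $F'$ meeting $F$ exactly in $A$ --- applied to the petals $F' \setminus A$, the sunflower lemma then yields a $q$-star with kernel $A$ containing $F$ --- or there are fewer, in which case I remove $F$. A double-count on these ``bad'' pairs bounds the total removal by a constant fraction of $|\mathcal{F}^*|$. For property 4 (intersection-closedness), observe that once property 5 holds, given $A,B \in \mathcal{J}$ there are many edges $F'$ with $F\cap F'=A$ and many $F''$ with $F\cap F''=B$; for most such pairs $\Pi(F \cap F' \cap F'')=A \cap B$, which forces $A \cap B$ into $\mathcal{J}$ after one more uniformization step.

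The main obstacle is the feedback between these properties: pruning for property 5 can shrink some $\Pi(\mathcal{I}(F,\mathcal{F}^*))$ (threatening property 3), while closing $\mathcal{J}$ under intersection can enlarge some (threatening property 5). The fix is a potential-function argument: since $|\mathcal{J}|\le 2^r$ is bounded, any alternation between enlarging $\mathcal{J}$ (for closure) and shrinking it (for pruning/re-uniformizing) can only occur $O(2^r)$ times. Each round costs a constant multiplicative factor in $|\mathcal{F}^*|$, and the product of these finitely many constants yields the required $c(r,q) > 0$.
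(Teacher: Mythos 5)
The paper does not prove Lemma~\ref{Zoll}; it cites it directly from F\"uredi's 1983 paper. So I can only assess your argument on its own terms, and there are two concrete gaps.

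The sharpest problem is your extraction of property 5. You claim that if at least $M$ edges $F'$ satisfy $F'\cap F=A$, then applying the sunflower lemma to the petals $F'\setminus A$ yields a $q$-star with kernel $A$ containing $F$. That is false: the sunflower lemma returns a sunflower with some kernel $K$, which need not be empty, and if $K\neq\emptyset$ the resulting $q$-star has kernel $A\cup K\supsetneq A$, while $F$ intersects each $F'$ only in $A$, so $\{F,F'_1,\dots\}$ is not a sunflower at all. A concrete obstruction: take $F=\{1,2,3\}$, $A=\{1\}$, and all $F'$ of the form $\{1,4,y\}$ with $y\geq 5$; every petal contains $4$, so no collection of them has pairwise disjoint petals, and no $q$-star with kernel $\{1\}$ containing $F$ exists, no matter how large $M$ is. Thus "$\geq M$ edges meeting $F$ exactly in $A$" simply does not imply the desired star, and the double-count that is supposed to bound the removals is counting the wrong objects. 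The classical proofs handle this by observing that a nonempty kernel $K$ promotes the relevant pattern to $\Pi(A\cup K)$, and then run a much more delicate recursion over patterns; this is the genuine content of the lemma and cannot be replaced by a single application of the sunflower lemma per pair $(F,A)$.

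The second gap is the termination of your pigeonhole loop for property 3. You argue that $\mathcal{J}$ is non-increasing and bounded by $2^r$, hence the loop runs for at most $2^r$ rounds. But $|\mathcal{J}|$ being non-increasing only guarantees that its value eventually stabilizes; it does not guarantee the process stops. After $\mathcal{J}$ stabilizes at some value, the largest level set can still be a proper subset of the current family (because removing edges changes $\mathcal{I}(F,\cdot)$ for the survivors), so the family can keep shrinking round after round with $\mathcal{J}$ unchanged, and the number of such rounds is not bounded by a function of $r$ and $q$. Consequently the product of the per-round loss factors is not controlled, and the constant $c(r,q)$ is not actually obtained. Your final "potential-function" paragraph points at the right tension between properties 3, 4, and 5, but invoking "$|\mathcal{J}|\leq 2^r$ so $O(2^r)$ alternations" is not an argument; the monovariant you name does not strictly decrease under the operations you describe. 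The random $r$-partition step is fine, but the core of the lemma --- the careful pruning that simultaneously yields the uniform semilattice $\mathcal{J}$ and the $q$-stars with a bounded multiplicative loss --- is not established here.
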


{\bf Remark 1.} {\em The proof of Lemma~\ref{Zoll} in~\cite{Furedi1} yields that if $\mathcal{F}$ itself is $r$-partite with an $r$-partition $(X_1,\ldots,X_r)$, then the $r$-partition in the statement can be taken the same.}

\medskip
{\bf Remark 2.} {\em By definition,  if for some $M\subset [r]$ none of the  members of the family $\mathcal{J}$ of proper subsets of  $[r]$ in
Lemma~\ref{Zoll} contains $M$, then for any two sets $F_1,F_2\in \mathcal{F}^*$, their intersections with
$\bigcup_{j\in M}X_j$ are distinct. It follows that if $|M|=m$, then
 $|\mathcal{F}^*|\leq \prod_{j\in M}|X_j| \leq \left(\frac{ n-(r-m)}{m}\right)^m$.
Thus, if $|\mathcal{F}^*|> \left(\frac{ n-r+m}{m}\right)^m$, then every $m$-element subset of $[r]$ is contained in some
$B\in \mathcal{J}$.}

Call a family $ \mathcal{J}$ of proper subsets of $[r]$ {\em $m$-covering} if every $m$-element subset of $[r]$ is contained in some
$B\in \mathcal{J}$. In these terms, Remark 2 says that
\begin{equation}\label{cover}
\mbox{\em if $|\mathcal{F}^*|> \left(\frac{ n-r+m}{m}\right)^m$, then the corresponding $ \mathcal{J}$ is $m$-covering.}
\end{equation}

\medskip
For $k=0,1,\ldots,r$, define the family $ \mathcal{J}^{(k)}$ of proper subsets of $[r]$ as follows. It contains  \newline
 (a) the sets $[r]\setminus \{i\}$ for $1\leq i\leq k$,  \newline
 (b) all $(r-2)$-element subsets of $[r]$ containing $\{1,2,\ldots,k\}$, and  \newline
 (c) all the intersections of these subsets.

By definition, each $ \mathcal{J}^{(k)}$ is  $(r-2)$-covering. Moreover,
\begin{equation}\label{cover2}
\parbox{14.5cm}{\em each $(r-2)$-covering  family  of proper subsets of $[r]$ closed under intersections contains a subfamily isomorphic to some $ \mathcal{J}^{(k)}$.}
\end{equation}
Indeed, if an $(r-2)$-covering  family $ \mathcal{J}$ of proper subsets of $[r]$ contains exactly $k$ sets of size $r-1$, 
then it must contain as members all  $(r-2)$-element subsets of $[r]$ not contained in these $k$ sets, so properties (a) and (b) of the definition hold. Part (c) follows since $ \mathcal{J}$ is closed under intersections.

\section{General claims on intersection structures.}\label{lemmas}

Call a set  $B$ a  {\em $(b,q)$-kernel} in a set system $\mathcal{F}$ if $B$ is the kernel of size $b$ in a sunflower with $q$ petals formed by members of $\mathcal{F}$.

\begin{lemma}\label{no+b} If $a+b=r$ and an $r$-uniform family $\mathcal{H}$ does not contain $\BB_{\ttt  ,h}(a,b)$, then there do not exist
disjoint sets $A_0,B_1,B_2,\ldots,B_\ttt $ with $|A_0|=a$, $|B_1|=\ldots =|B_\ttt |=b$ such that all $B_1,\ldots,B_\ttt $ are $(b,\ttt hr)$-kernels in $\mathcal{H}$ and the sets $A_0\cup B_1,\ldots ,A_0\cup B_\ttt $ are edges of $\mathcal{H}$.
\end{lemma}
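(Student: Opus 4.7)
The plan is to argue by contraposition. Suppose we have disjoint sets $A_0, B_1, \dots, B_\ttt$ with $|A_0|=a$, $|B_i|=b$, all the edges $A_0 \cup B_i$ in $\mathcal{H}$, and each $B_i$ a $(b,\ttt h r)$-kernel. I will explicitly build a copy of $\BB_{\ttt,h}(a,b)$ inside $\mathcal{H}$ by setting $A := A_0$, using the $B_i$ as the middle $b$-sets, using the edges $A\cup B_i$ as the ``inner'' bush edges, and then selecting $\ttt h$ leaf $a$-sets $A_{i,j}$ from the sunflowers supplied by the kernel hypothesis.

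For each $i$, the $(b,\ttt h r)$-kernel property yields a sunflower in $\mathcal{H}$ of $\ttt h r$ edges with kernel $B_i$; the corresponding $\ttt h r$ petals are pairwise disjoint $a$-sets, each disjoint from $B_i$ automatically. I would pick the leaves $A_{i,j}$ one at a time in any fixed order, always choosing a petal of the $B_i$-sunflower that avoids $A_0$, the other $B_k$ with $k\neq i$, and the previously chosen leaves. At any such step, the forbidden vertex set has size at most $a+(\ttt-1)b+(\ttt h-1)a = \ttt h a+(\ttt-1)b$; since petals of a sunflower are pairwise disjoint, at most that many of the $\ttt h r=\ttt h a+\ttt h b$ petals can be blocked, leaving at least $b(\ttt(h-1)+1)\ge 1$ valid choices. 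So the greedy procedure never gets stuck, and the sets $A, B_1,\ldots,B_\ttt$ together with the selected $A_{i,j}$ assemble into a copy of $\BB_{\ttt,h}(a,b)$ in $\mathcal{H}$, contradicting bush-freeness.

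The only subtlety is noticing that $B_i$ itself need not be included in the forbidden set when selecting a petal of the $B_i$-sunflower, because its petals are disjoint from $B_i$ by definition. This observation is exactly what makes the constant $\ttt h r$ in the kernel size just enough for the arithmetic to close, and it is the single place where one has to be careful rather than hurried; everything else is bookkeeping.
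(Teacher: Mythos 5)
Your proposal is correct and follows essentially the same argument as the paper: greedily select the leaf $a$-sets $A_{i,j}$ as petals of the $\ttt hr$-sunflowers with kernels $B_i$, avoiding the previously fixed vertices, and note that the forbidden set is small enough (at most $\ttt ha+(\ttt-1)b<\ttt hr$ vertices, each blocking at most one petal) for the selection never to get stuck. The paper picks the $h$ petals for each $B_i$ in one step rather than one at a time, but this is only a cosmetic difference in bookkeeping.
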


\begin{proof} Suppose, there are such disjoint sets $A_0,B_1,B_2,\ldots,B_\ttt $. Let $D_0=A_0\cup \bigcup_{j=1}^\ttt B_j$. For $i=1,\ldots,\ttt $, do the following. Since $B_i$ is a $(b,\ttt hr)$-kernel and $|D_{i-1}\setminus B_i|=a+(\ttt -  1)b +(i-1)ha\leq \ttt  h r -h$, there exist $h$ petals $A_{i,j}$ ($1\leq j\leq h$) of a $\ttt hr$-sunflower with kernel $B_i$ that are disjoint from $D_{i-1}$. Let $D_i=D_{i-1}\cup\bigcup_{j=1}^h A_{i,j}$.
After $\ttt $ steps, we find a $\BB_{\ttt  ,h}(a,b)$ whose  edges are $A_0\cup B_i$ and $B_i\cup A_{i,j}$ for $i=1,\ldots,\ttt $, $j=1,2,\ldots,h$.
\end{proof}

Suppose $a+b=r$ and $ \mathcal{G}\subset {[n]\choose r}$ with $|\mathcal{G}|>\frac{1}{c(r,\ttt hr)}n^{r-2}$
does not contain $\BB_{\ttt  ,h}(a,b)$.
By Lemma~\ref{Zoll} and~\eqref{cover}, there is $\mathcal{G}^*\subseteq \mathcal{G}$
satisfying the lemma such that the corresponding family $\mathcal{J}$ of proper subsets of  $[r]$ is $(r-2)$-covering.
Let $(X_1,\ldots,X_r)$ be the corresponding partition.

\begin{lemma}\label{noa+b} Family $\mathcal{J}$ does not contain disjoint members $A$ and $B$ such that $|A|=a$ and $|B|=b$.
\end{lemma}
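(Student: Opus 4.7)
My plan is to argue by contradiction: suppose there are disjoint $A, B \in \mathcal{J}$ with $|A|=a$ and $|B|=b$, and build a copy of $\BB_{\ttt ,h}(a,b)$ inside $\mathcal{G}^* \subseteq \mathcal{H}$ using Lemma~\ref{no+b}. The overall strategy is to use the $\Delta$-system properties granted by Lemma~\ref{Zoll} twice: once to locate an $a$-set that will serve as the center $A_0$ together with a large sunflower around it, and then to argue that the petals of that sunflower are themselves $(b, \ttt hr)$-kernels, so that Lemma~\ref{no+b} applies.

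Since $a+b=r$ and $A,B$ are disjoint subsets of $[r]$, necessarily $A \sqcup B = [r]$. Fix any $F \in \mathcal{G}^*$. Because $A \in \mathcal{J} = \Pi(\mathcal{I}(F, \mathcal{G}^*))$, some intersection $S_A \subseteq F$ has pattern $A$, and the $r$-partiteness of $\mathcal{G}^*$ forces $|S_A|=a$. By property 5 of Lemma~\ref{Zoll}, $S_A$ is the kernel of a $\ttt hr$-sunflower in $\mathcal{G}^*$; I write its members as $F_1, \dots, F_{\ttt hr}$ with $F_j = S_A \cup Y_j$, so the petals $Y_j$ are pairwise disjoint $b$-sets, each of pattern $B$ (the coordinates in $A$ are already occupied by $S_A$).

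The key step will be to show that each petal $Y_j$ is itself a $(b, \ttt hr)$-kernel. This is where the $r$-partite structure pays off: since $F_j$ meets each $X_i$ in exactly one vertex, the unique subset of $F_j$ with pattern $B$ is $Y_j$. Applying property 5 of Lemma~\ref{Zoll} to $F_j$ and to the set $B \in \mathcal{J} = \Pi(\mathcal{I}(F_j,\mathcal{G}^*))$ therefore forces $Y_j$ to be the kernel of a $\ttt hr$-sunflower in $\mathcal{G}^*$. I expect this pattern-to-vertex bookkeeping, i.e., matching an abstract member of $\mathcal{J}$ with the unique concrete subset of $F_j$ realizing that pattern, to be the main (if minor) obstacle in the argument.

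With this in hand, I will select the first $\ttt$ petals $Y_1, \dots, Y_\ttt$ (available since $\ttt hr \geq \ttt$) and set $A_0 = S_A$, $B_i = Y_i$. The sets $A_0, B_1, \dots, B_\ttt$ are then pairwise disjoint, each $B_i$ is a $(b, \ttt hr)$-kernel in $\mathcal{G}^*$, and each $A_0 \cup B_i = F_i$ is an edge of $\mathcal{G}^*$. Lemma~\ref{no+b} then produces a $\BB_{\ttt ,h}(a,b)$ inside $\mathcal{H}$, contradicting the hypothesis on $\mathcal{H}$ and completing the proof.
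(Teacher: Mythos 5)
Your proposal is correct and follows essentially the same argument as the paper: fix an edge, use $A\in\mathcal{J}$ together with the $r$-partite structure to locate an $a$-subset that is a $\ttt hr$-kernel, note that each petal of the resulting sunflower is the unique subset of its edge with pattern $B$ and is therefore itself a $\ttt hr$-kernel, and then invoke Lemma~\ref{no+b}. The paper's proof is just a terser version of the same pattern-to-vertex bookkeeping you spell out.
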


\begin{proof} Suppose, it does. By renaming the elements of $\mathcal{J}$, we may assume that $A=\{1,\ldots,a\}$ and
$B=\{a+1,\ldots,r\}$. Let $X=\{x_1,\ldots,x_r\}\in\mathcal{G}^*$, where $x_i\in X_i$ for all $i$.
 Since $A\in \mathcal{J}$, $\{x_1,\ldots,x_a\}$ is an $(a,\ttt hr)$-kernel in $\mathcal{G}^*$. Let $B_1,\ldots,B_\ttt $ be some $\ttt $ petals in the sunflower with kernel $\{x_1,\ldots,x_a\}$. As $[r]\setminus [a]=B\in \mathcal{J}$, each of $B_1,\ldots,B_\ttt $
 is a $(b,\ttt hr)$-kernel in $\mathcal{G}^*$, contradicting Lemma~\ref{no+b}.
\end{proof}

\begin{lemma}\label{inter} If $a+b=r$ and $2\leq a,b\leq r-2$, then for each $0\leq k\leq r-2$ and for $k=r$ the family
$ \mathcal{J}^{(k)}$ has
 disjoint members $A$ and $B$ such that $|A|=a$ and $|B|=b$, unless $(r,a,b,k)=(4,2,2,1)$.

  If $(a,b)=(1,r-1)$ or $(a,b)=(r-1,1)$ and $r\geq 3$, then for each $1\leq k\leq r-2$ and for $k=r$ the family
$ \mathcal{J}^{(k)}$ has
 disjoint members $A$ and $B$ such that $|A|=a$ and $|B|=b$.
\end{lemma}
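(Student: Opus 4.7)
My plan is to first nail down exactly which subsets of $[r]$ belong to $\mathcal{J}^{(k)}$, and then reduce the existence of a disjoint pair $A,B$ of prescribed sizes to a simple numerical condition on one integer. Concretely, I claim that for $0\leq k\leq r-2$, a proper subset $S\subseteq [r]$ lies in $\mathcal{J}^{(k)}$ if and only if its complement $R:=[r]\setminus S$ is nonempty and $|R\cap\{k+1,\ldots,r\}|\neq 1$. The ``only if'' direction is immediate from the definition: each generator in (a) removes one element of $[k]$ and each generator in (b) removes a pair inside $\{k+1,\ldots,r\}$, and intersecting generators corresponds to unioning the removed sets, so $R\cap\{k+1,\ldots,r\}$ is always a union of pairs in $\{k+1,\ldots,r\}$. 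Conversely, given nonempty $R$ with $|R\cap\{k+1,\ldots,r\}|\in\{0,2,3,\ldots\}$, write $R\cap\{k+1,\ldots,r\}$ as an explicit union of pairs (any set of size at least $2$ is such a union) and intersect the corresponding generators from (b) with the $(r-1)$-subsets $[r]\setminus\{i\}$ for $i\in R\cap[k]$ to obtain $S$. For $k=r$ the analogous (simpler) argument gives that $\mathcal{J}^{(r)}$ is the family of all proper subsets of $[r]$.

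With this description in hand, a partition $A\sqcup B=[r]$ with $|A|=a$, $|B|=b$ satisfies $A,B\in\mathcal{J}^{(k)}$ iff the integer $m:=|A\cap\{k+1,\ldots,r\}|$ avoids both $1$ and $r-k-1$, while lying in the feasibility range $[\max(0,a-k),\min(a,r-k)]$ (once $m$ is chosen, $A\cap[k]$ may be any $(a-m)$-subset of $[k]$). Thus the lemma reduces to showing this range contains a value outside $\{1,r-k-1\}$ in every asserted case.

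For the first case $2\leq a,b\leq r-2$ with $k\in\{0,1,\ldots,r-2\}$, a short analysis of the endpoints of the interval $[\max(0,a-k),\min(a,r-k)]$ shows that the interval fails to contain an admissible $m$ only when the interval equals $\{1,2\}$ and $r-k-1=2$, which together with $a+b=r$ and $2\leq a,b\leq r-2$ forces $a=2$, $k=1$, $r=4$, i.e.\ $(r,a,b,k)=(4,2,2,1)$. For the second case, where $(a,b)\in\{(1,r-1),(r-1,1)\}$ and $1\leq k\leq r-2$, pick any $i\in[k]$ and take $\{A,B\}=\{\{i\},[r]\setminus\{i\}\}$: the complement of $[r]\setminus\{i\}$ is $\{i\}\subseteq[k]$ with $|R\cap\{k+1,\ldots,r\}|=0$, while the complement of $\{i\}$ is $[r]\setminus\{i\}$ with $|R\cap\{k+1,\ldots,r\}|=r-k\geq 2$, so both sets satisfy the characterization. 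The subcase $k=r$ is immediate since $\mathcal{J}^{(r)}$ already contains every proper subset of $[r]$.

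The main obstacle is the bookkeeping needed to verify that $(4,2,2,1)$ is the genuine and only obstruction in the first case; the potentially troublesome subconfigurations are those with $\max(0,a-k)=1$ or $\min(a,r-k)\in\{1,r-k-1\}$, and a patient but finite check rules out all of them apart from the single listed exception. Once the characterization of $\mathcal{J}^{(k)}$ is established, everything else is mechanical.
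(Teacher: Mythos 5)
Your proof is correct, and it takes a genuinely different route from the paper's. The paper verifies the lemma by directly exhibiting, case by case in $k$ (namely $k\geq a$, $k\geq b$, $k\leq a-2$, $k\leq b-2$, and the remaining diagonal case $k=a-1=b-1$), an explicit decomposition of each of $A$ and $B$ as an intersection of the generators listed in (a) and (b). You instead first prove a clean closed-form description of $\mathcal{J}^{(k)}$: for $0\leq k\leq r-2$, a proper subset $S\subsetneq[r]$ lies in $\mathcal{J}^{(k)}$ exactly when $|([r]\setminus S)\cap\{k+1,\ldots,r\}|\neq 1$, and $\mathcal{J}^{(r)}=2^{[r]}\setminus\{[r]\}$. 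This reduces the lemma to asking whether the integer interval $[\max(0,a-k),\min(a,r-k)]$ contains a value $m\notin\{1,r-k-1\}$, which is a one-variable numerical check. Both arguments are sound; yours has the advantage of making it transparent \emph{why} $(r,a,b,k)=(4,2,2,1)$ is the unique exception (the interval degenerates to $\{1,2\}$ precisely there, and one can see that $b\geq 3$ forces at least three values into the interval), whereas the paper's constructions make the exception look like an artifact of the chosen representations. The cost is that your final endpoint analysis is stated as ``a patient but finite check''; since this is the crux of the first assertion, it deserves to be written out: since $u:=\min(a,r-k)\geq2$ one must have $u=r-k-1$, hence $u=a$ and $k=b-1$, which forces $\ell:=\max(0,a-k)=a-b+1\geq 1$ (else $0$ is an admissible $m$), and then $[\ell,u]\subseteq\{1,a\}$ with $u-\ell+1=b$ forces $b=2$ and $a=2$. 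Once that step is spelled out, the argument is complete and fully self-contained.
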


\begin{proof} If $k\geq a$, then we let $A=[a]$, $B=[r]\setminus [a]$, and represent them as follows:
$$A=\bigcap_{k+1\leq i<i'\leq r}([r]\setminus \{i,i'\})\cap \bigcap_{a+1\leq i\leq k}([r]\setminus \{i\}),\quad
B=\bigcap_{1\leq i\leq a}([r]\setminus \{i\}).$$
If $k\geq b$, then we have a symmetric representation.
 In particular, this proves the claim for $(a,b)=(1,r-1)$ and $(a,b)=(r-1,1)$.

 If $k\leq a-2$, then we again let $A=[a]$, $B=[r]\setminus [a]$, but represent them as follows (using that $a\leq r-2$ and $k\leq a-2$):
$$A=\bigcap_{a+1\leq i<i'\leq r}([r]\setminus \{i,i'\}),\quad
B=\bigcap_{1\leq i\leq k}([r]\setminus \{i\})\cap \bigcap_{k+1\leq i<i'\leq a}([r]\setminus \{i,i'\}).$$
By symmetry, the only remaining case is that $k=a-1=b-1$. So $r$ is even, and $a=b=r/2=k+1$. In this case, if $r>4$, then
 we let $A=[k-1]\cup \{k+1,k+2\}$, $B=([r]\setminus [k+2])\cup \{k\}$, and represent them as follows:
$$A=\bigcap_{k+3\leq i<i'\leq r}([r]\setminus \{i,i'\})\cap ([r]\setminus \{k\}),\quad
B=\bigcap_{1\leq i\leq k-1}([r]\setminus \{i\})\cap ([r]\setminus \{k+1,k+2\}).$$
\end{proof}

\section{Proof of the main Theorem for $a>1$} \label{maint} 

In this section we prove the main part of Theorem~\ref{th:main}: the case of $2\leq a\leq r-1$. In Subsection~\ref{41} we describe a procedure of partitioning $\Hh$ into structured subfamilies, and in the next three subsections we  use this partition to find the required bush or to get a contradiction to~\eqref{eq:12}.
We distinguish three cases: (i) $a,b\geq 2$ and $r\geq 5$ (discussed in Subsection~\ref{ss41}), (ii) $(a,b)=(2,2)$ and $r=4$ (Subsection~\ref{ss42}), and  (iii) $(a,b)=(r-1,1)$ and $r\geq 3$ (Subsection~\ref{ss43}).

\subsection{Basic procedure}\label{41}

Assume that $2\leq a\leq r-1$ and that an $n$-vertex $r$-uniform family $\mathcal{H}$ 
satisfies~\eqref{eq:12} but  does not contain $\BB_{\ttt  ,h}(a,b)$.
Define $C=C(r,\ttt,h) := 1/c(r,\ttt hr)$, where $c$ is from Lemma~\ref{Zoll}.
For any $r$-uniform family  $\mathcal{G}$, let $\mathcal{G}^*$ denote a family
satisfying Lemma~\ref{Zoll} and  $\mathcal{J}(\mathcal{G}^*)\subset 2^{[r]}$ denote the corresponding intersection structure.

Do the following procedure. Let $\mathcal{H}_1=\mathcal{H}^*$
and $\JJ_1=\mathcal{J}(\mathcal{H}^*)$. 
For $i=1,2,\ldots,$ if 
    $|\mathcal{H} \setminus \bigcup_{j=1}^i \mathcal{H}_j|\leq C n^{r-2}$,
then stop and let $m:=i$ and 
    $\mathcal{H}_0=\mathcal{H}\setminus \bigcup_{j=1}^i \mathcal{H}_j$;
otherwise, let 
    $\mathcal{H}_{i+1}:=(\mathcal{H}\setminus \bigcup_{j=1}^i \mathcal{H}_j)^*$
    and $\JJ_{i+1}=\mathcal{J}((\mathcal{H}\setminus \bigcup_{j=1}^i \mathcal{H}_j)^*)$.

This procedure provides a partition of $\mathcal{H}$, 
    $\mathcal{H}=\bigcup_{i=0}^m \mathcal{H}_i$.
Let $\HH$ denote $\bigcup_{i=1}^m \mathcal{H}_i$.
By definition, 
    $|\mathcal{H}_0| \leq Cn^{r-2}$,
so we get
\begin{equation}\label{eq:41}
|\HH| +  Cn^{r-2} \geq  | \mathcal{H}|.
  \end{equation}

\subsection{Case of  $a,b\geq 2$ and $r\geq 5$}\label{ss41}
Here $2\leq a,b\leq r-2$ and $(a,b)\neq (2,2)$, so Lemmas~\ref{noa+b} and~\ref{inter} imply that for each $1\leq i\leq m$, $\mathcal{J}(\mathcal{H}_i)$ has exactly $r-1$ $(r-1)$-subsets, it contains $ \mathcal{J}^{(r-1)}$.
Hence for each hyperedge $E\in \mathcal{H}_i\subset \HH $ ($1\leq i\leq m$) there exists an element $c(E)\in E$ such that
 each proper subset of $E$ containing $c(E)$ is a kernel of an $\ttt hr$-star in  $\mathcal{H}_i$.
Beware of the fact that although each $\mathcal{H}_i$ is $r$-partite, the partitions might differ for different values of $i$.
This does not cause any problem in our argument, we only need the existence of the element $c(E)$.

Define the function $\alpha$ on ${[n] \choose r-1}$ as follows:
For each $(r-1)$-set $Y\subset [n]$, let $\alpha(Y)$ be the number of edges $E\in \HH$ with $Y=E\setminus \{ c(E)\}$.

\begin{claim}\label{distinct}
$\alpha(Y)\leq \ttt -1$, for each $(r-1)$-subset $Y$ of $[n]$.
\end{claim}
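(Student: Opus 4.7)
The plan is to argue by contradiction using Lemma~\ref{no+b}: suppose there exist $\ttt$ distinct edges $E_j=Y\cup\{x_j\}$ in $\HH$ with $c(E_j)=x_j$, so that $x_1,\ldots,x_\ttt$ are all distinct and each $E_j$ lies in some family $\mathcal{H}_{i(j)}$. My goal is to produce pairwise disjoint sets $A_0,B_1,\ldots,B_\ttt$ with $|A_0|=a$, $|B_j|=b$, every $A_0\cup B_j$ an edge of $\mathcal{H}$, and every $B_j$ a $(b,\ttt hr)$-kernel in $\mathcal{H}$; Lemma~\ref{no+b} will then yield the forbidden bush and the desired contradiction.

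The construction is driven by intermediate kernels of size $a+1$. Fix any $A_0\subset Y$ with $|A_0|=a$ and set $S_j:=A_0\cup\{x_j\}$. Because $|S_j|=a+1\le r-1$ (using $b\ge 2$ here) and $S_j$ is a proper subset of $E_j$ containing $c(E_j)=x_j$, the defining property of $c$ promotes $S_j$ to the kernel of an $\ttt hr$-sunflower inside $\mathcal{H}_{i(j)}$ whose petals are pairwise disjoint $(b-1)$-sets. I then select, greedily for $j=1,\ldots,\ttt$, a petal $Q_j$ of the $j$-th sunflower avoiding the ``forbidden'' set $(Y\setminus A_0)\cup\{x_k:k\ne j\}\cup Q_1\cup\cdots\cup Q_{j-1}$. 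This forbidden set has at most $(b-1)+(\ttt-1)+(\ttt-1)(b-1)=\ttt b-1$ elements, and since the petals of one sunflower are pairwise disjoint each forbidden vertex blocks at most one petal, so at least $\ttt hr-(\ttt b-1)\ge \ttt a+1>0$ petals remain at every step. Setting $B_j:=\{x_j\}\cup Q_j$ yields pairwise disjoint $b$-sets with $A_0\cup B_j=S_j\cup Q_j\in\mathcal{H}_{i(j)}$.

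Finally I verify that each $B_j$ is a $(b,\ttt hr)$-kernel: inside the $r$-partition of $\mathcal{H}_{i(j)}$ the distinguished class (the one picking out $c$) contains $x_j$ but is disjoint from $A_0\subset Y$ and from $Q_j$ (whose pattern is exactly the complement of the pattern of $S_j$, which already contains the distinguished index), so the edge $A_0\cup B_j$ has $c$-value $x_j$. Since $x_j\in B_j$ and $B_j\subsetneq A_0\cup B_j$, the defining property of $c$ applied now to the edge $A_0\cup B_j$ shows $B_j$ is a kernel of an $\ttt hr$-sunflower in $\mathcal{H}_{i(j)}\subseteq\mathcal{H}$. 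Applying Lemma~\ref{no+b} to $(A_0,B_1,\ldots,B_\ttt)$ then produces $\BB_{\ttt,h}(a,b)\subseteq\mathcal{H}$, the desired contradiction. The only real obstacle is the greedy selection step, and it succeeds precisely because Lemma~\ref{Zoll} was applied with sunflower parameter $\ttt hr$ — a constant much larger than the cumulative forbidden vertex count.
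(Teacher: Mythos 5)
Your proposal is correct and follows essentially the same route as the paper: fix $A_0\subset Y$ of size $a$, promote each $A_0\cup\{x_j\}$ to an $(a+1,\ttt hr)$-kernel in its own $\mathcal{H}_{i(j)}$ via the $c$-property, greedily pick petals to manufacture pairwise disjoint $B_j$'s whose $c$-value is still $x_j$, and then contradict Lemma~\ref{no+b}. The paper organizes this through Claims~\ref{distinct51} and~\ref{distinct52} (proving a slightly stronger statement for $a\le|Y|\le r-1$, reused in the stability section), whereas you argue the $|Y|=r-1$ case directly and derive the kernel property of $B_j$ from the $r$-partition rather than from the phrase ``homogeneity,'' but the underlying construction and counting are identical.
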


This Claim  implies
\begin{equation*} 
(\ttt -  1)|\partial \HH| \geq \sum\nolimits_{Y\in {[n]\choose r-1}}   
\alpha(Y)=  |\HH|.
  \end{equation*}
This, together with~\eqref{eq:41} contradicts~\eqref{eq:12} and thus completes the proof of Theorem~\ref{th:main} in this case.

We prove Claim~\ref{distinct} in two steps in a stronger form which will be  useful in Section~\ref{stabi} to handle the stability of the extremal systems (i.e., Theorem~\ref{th:stab}).
For every $Y\subset [n]$, let $\UU(Y)$ be the set of vertices $v\in [n]\setminus Y$ such that there is an edge $E\in \HH$ containing $Y$ with $c(E)=v$.

\begin{claim}\label{distinct51}
Suppose $Y\subset [n]$, $a\leq |Y|\leq r-1$, $v,v'\in \UU(Y)$, $v\neq v'$ and edges $E,E'\in \HH$ are such that $v=c(E)$, $v'=c(E')$ and
$Y\subseteq E\cap E'$.
If
$E\in \Hh_i$ and $E'\in \Hh_{i'}$, then $i\neq i'$.
\end{claim}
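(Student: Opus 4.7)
The plan is to argue by contradiction. Suppose $i=i'$, so $E,E'\in \Hh_i$. Since $c(E)=v\neq v'=c(E')$, the edges $E$ and $E'$ are distinct. The goal will be to produce two disjoint members of $\JJ_i$ of sizes $a$ and $b$, contradicting Lemma~\ref{noa+b}.

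First I would exploit the $r$-partite structure of $\Hh_i$. Since $\JJ_i\supseteq \JJ^{(r-1)}$, I may relabel the $r$-partition $(X_1^i,\ldots,X_r^i)$ so that $X_r^i$ is the distinguished part, meaning $c(F)$ is the unique element of $F\cap X_r^i$ for every $F\in \Hh_i$. In particular $v,v'\in X_r^i$; combined with $v\neq v'$, this forces $E\cap E'\cap X_r^i=\emptyset$, i.e.\ $r\notin \Pi(E\cap E')$, so $\Pi(E\cap E')\subseteq [r-1]$.

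By Lemma~\ref{Zoll}, $\Pi(E\cap E')\in \JJ_i$, and the hypothesis $Y\subseteq E\cap E'$ together with $r$-partiteness gives $|\Pi(E\cap E')|=|E\cap E'|\geq |Y|\geq a$. Now $\JJ_i$ is closed under intersection and contains $[r]\setminus\{j\}$ for every $j\in [r-1]$, so intersecting $\Pi(E\cap E')$ with sets of the form $[r]\setminus J$ for $J\subseteq [r-1]$ shows that \emph{every} subset of $\Pi(E\cap E')$ lies in $\JJ_i$. Pick such a subset $T\subseteq \Pi(E\cap E')$ with $|T|=a$.

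Finally, since $T\subseteq [r-1]$, the complement $[r]\setminus T$ has size $b$ and contains $r$, placing it in $\JJ^{(r-1)}\subseteq \JJ_i$. Then $T$ and $[r]\setminus T$ are disjoint members of $\JJ_i$ of sizes $a$ and $b$, contradicting Lemma~\ref{noa+b}. The only subtle step is the first one: the conclusion $\Pi(E\cap E')\subseteq [r-1]$ crucially uses both the $r$-partiteness of $\Hh_i$ and the hypothesis $v\neq v'$; everything afterwards is routine manipulation of the intersection-closed family $\JJ_i$, and in particular the precise value of $|Y|$ is irrelevant beyond $|Y|\ge a$.
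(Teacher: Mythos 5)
Your proof is correct and follows essentially the same route as the paper's: identify the distinguished part $X_r$ containing $v\neq v'$ to conclude $r\notin \Pi(E\cap E')$, observe $\Pi(E\cap E')\in\JJ_i$ with $|\Pi(E\cap E')|\ge a$, then use intersection-closedness together with the $(r-1)$-sets $[r]\setminus\{j\}$, $j\in[r-1]$, to extract disjoint members of sizes $a$ and $b$ contradicting Lemma~\ref{noa+b}. The paper merely simplifies bookkeeping by relabeling so that $\Pi(E\cap E')$ becomes an initial segment $[|Z|]$, but the underlying argument is identical.
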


\begin{proof} Suppose  $v\neq v'$, but $i=i'$.
We may assume that the partition of $[n]$ corresponding to $\Hh_i$ is $(X_1,\ldots,X_r)$ and
$v,v'\in X_r$.
Let $Z:=\{ j\in [r]: X_j\cap E\cap E'\neq \emptyset\}$.
By symmetry, we may also assume that $E\cap E' \subset X_1\cup\ldots\cup X_{|Z|}$.
So $Z\in \JJ_i$ and we know that $a\leq |Z|\leq r-1$.
The family $\JJ_i$ contains $ \mathcal{J}^{(r-1)}$, namely $[r]\setminus \{j\}\in \JJ_i$ for each $1\leq j\leq r-1$.
Since $\JJ_i$ is intersection closed it must contain every subset of $Z$, e.g., $[a]\in \JJ_i$,
 and it also contains every subset containg the element $r$, e.g., $[r]\setminus [a]\in \JJ_i$.
This contradicts Lemma~\ref{noa+b}.
\end{proof}

\begin{claim}\label{distinct52} Suppose $Y\subset [n]$, $a\leq |Y|\leq r-1$. Then $|\UU(Y)|\leq \ttt -1$.
\end{claim}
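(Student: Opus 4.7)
The plan is to argue by contradiction via Lemma~\ref{no+b}. Assuming $|\UU(Y)|\geq\ttt$, pick distinct $v_1,\ldots,v_\ttt\in\UU(Y)$ together with witnessing edges $E_1,\ldots,E_\ttt\in\HH$ satisfying $Y\subseteq E_j$ and $c(E_j)=v_j$. Claim~\ref{distinct51} then forces the indices $i_j$ with $E_j\in\Hh_{i_j}$ to be pairwise distinct, so the kernel/star structure associated with each $v_j$ can be exploited independently inside its own part $\Hh_{i_j}$. The goal is to build pairwise disjoint sets $A_0,B_1,\ldots,B_\ttt$ satisfying the hypotheses of Lemma~\ref{no+b}, since these immediately produce the forbidden $\BB_{\ttt,h}(a,b)$.

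For the common $a$-set I take $A_0$ to be any $a$-subset of $Y$; this exists because $|Y|\geq a$, and automatically $A_0\cap\{v_j\}=\emptyset$ because $v_j\in[n]\setminus Y$. For each $j$, the set $A_0\cup\{v_j\}$ has size $a+1<r$ (using $b\geq 2$ which holds throughout this subsection), lies inside $E_j$, and contains $c(E_j)=v_j$. The kernel property therefore supplies a $\ttt hr$-sunflower in $\Hh_{i_j}$ with kernel $A_0\cup\{v_j\}$ and $\ttt hr$ pairwise disjoint petals $P_1^{(j)},\ldots,P_{\ttt hr}^{(j)}$ of size $b-1$, each disjoint from $A_0\cup\{v_j\}$, with $A_0\cup\{v_j\}\cup P_k^{(j)}\in\Hh_{i_j}$ for every $k$.

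Next, I would select indices $k_j$ greedily for $j=1,\ldots,\ttt$, setting $B_j:=\{v_j\}\cup P_{k_j}^{(j)}$. At step $j$, I require $P_{k_j}^{(j)}$ to avoid the forbidden set $\{v_l:l\neq j\}\cup\bigcup_{i<j}P_{k_i}^{(i)}$, whose cardinality is at most $(\ttt-1)+(\ttt-1)(b-1)=(\ttt-1)b<\ttt hr$; since the petals of a fixed sunflower are pairwise disjoint, at most $(\ttt-1)b$ of them can meet this forbidden set, so a valid $k_j$ exists. This ensures $|B_j|=b$, and makes $A_0,B_1,\ldots,B_\ttt$ pairwise disjoint, with $A_0\cup B_j$ an edge of $\Hh_{i_j}\subseteq\Hh$. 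To see that each $B_j$ is a $(b,\ttt hr)$-kernel in $\Hh$, observe that $v_j$ already plays the role of $c(E_j)$ in the $r$-partition of $\Hh_{i_j}$, so $c$ of every edge of $\Hh_{i_j}$ through $v_j$ equals $v_j$; in particular $c(A_0\cup B_j)=v_j$, and $B_j$ is a proper subset of $A_0\cup B_j$ containing its $c$-value, so the kernel property applies once more. Lemma~\ref{no+b} now contradicts the hypothesis that $\Hh$ is bush-free.

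The main point that requires care is the bookkeeping in the greedy step: I must include the \emph{other} vertices $v_l$ in the forbidden set at step $j$ so that they cannot leak into $B_j$'s petal part, and I must recognise that the new edges $A_0\cup B_j$ inherit their $c$-values from the partition of $\Hh_{i_j}$ rather than from the original edges $E_j$. Once these two observations are clear, the only numerical inequality needed is the elementary $(\ttt-1)b<\ttt hr$, and no further counting enters the argument.
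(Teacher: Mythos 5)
Your argument is correct and essentially the same as the paper's: both fix an $a$-subset $A_0\subset Y$, use Claim~\ref{distinct51} to place the edges $E_j$ in distinct families $\Hh_{i_j}$, exploit that $A_0\cup\{v_j\}$ is an $(a+1,\ttt hr)$-kernel in $\Hh_{i_j}$ to greedily pick a petal avoiding a small forbidden set, and invoke homogeneity of the $c$-value within each $\Hh_{i_j}$ to conclude that $B_j$ is a $(b,\ttt hr)$-kernel, contradicting Lemma~\ref{no+b}. The only cosmetic difference is bookkeeping: the paper tracks a growing set $D_i$ containing the full chosen edges while you track only the petal parts and the $v_l$'s, but the counting and the use of Hall-free disjointness are identical.
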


\begin{proof} Suppose to the contrary that there are $\ttt$ distinct $v_1,\ldots,v_\ttt \in [n]$ and
distinct  $E_1,\ldots,E_\ttt \in \HH$ such that $Y\subseteq E_1\cap\ldots\cap E_\ttt $ and $v_i=c(E_i)$ for $i=1,\ldots,\ttt$.
Let $E_1\in \mathcal{H}_{i_1}, \ldots, E_\ttt \in \mathcal{H}_{i_\ttt }$. 
By Claim~\ref{distinct51},  $i_1,\ldots,i_\ttt $ are all distinct. By relabelling we may suppose that $E_i\in \mathcal{H}_i$.

We will find  $\ttt +1$ disjoint sets $A_0, B_1,B_2,\ldots,B_\ttt $ contradicting Lemma~\ref{no+b} using induction as follows.
Fix a subset $A_0$ of $Y$ with $|A_0|=a$ and let $D_0:= A_0 \cup \{ c(E_1), \dots, c(E_\ttt )\}$. We have   $|D_0|=a+\ttt $.
We define the sets $E_i', D_i, B_i$ step by step as follows. We will have $D_i:=D_0\cup\bigcup_{j \leq i} E_j'$ and $|D_i|=a+\ttt +i(r-1-a)$.
For $i=1, 2, \dots, \ttt $ consider the family $\mathcal{H}_i$ and its member $E_i$ in it.
By the intersection structure of $\mathcal{H}_i$,
  the set $A_0\cup \{c(E_i)\}$ is an $(a+1,\ttt hr)$-kernel in $\mathcal{H}_i$.
One of the $\ttt hr$ petals of the sunflower in $\mathcal{H}_i$ with kernel $A_0\cup \{c(E_i)\}$
   should be disjoint from $D_{i-1}$; let $E_i'$ be the corresponding set in $\mathcal{H}_i$. Since $c(E_i)\in E_i'$, and homogeneity gives $c(E_i')=c(E_i)$, the set
   $B_i:=E_i'\setminus A_0$ is a $(b,\ttt hr)$-kernel.
\end{proof}

\subsection{The case $(a,b)=(2,2)$}\label{ss42}
Lemmas~\ref{noa+b} and~\ref{inter} imply that for each $1\leq i\leq m$, either\newline
---
$\mathcal{J}^{(3)}$ is contained in $\mathcal{J}(\mathcal{H}_i)$, it has exactly three 3-subsets, so $[4]\setminus\{1\}$,
$[4]\setminus\{2\}$, and $[4]\setminus\{3\}$
 are in $\mathcal{J}$ and $\mathcal{J}$ also contains
  all subsets containing the element $4$ but it does not contain $[3]$,  or \newline
---
$\mathcal{J}(\mathcal{H}_i)$ is of $ \mathcal{J}^{(1)}$ type, it has a unique 3-subset, $\{2,3,4\}$,  and $\{ \{1\}, \{1,2\},
\{1,3\}, \{1,4\}\}\subset \mathcal{J}$.

\smallskip
Call $\mathcal{H}_i$ (and its edges) {\em type} $\alpha$ if $\mathcal{J}(\mathcal{H}_i)$ has three 3-subsets.
Each of these edges $E\in\mathcal{H}_i$ has an element $c(E)\in E$ such that
 each proper subset of $E$ containing $c(E)$ is a kernel of a $4\cdot \ttt \cdot h$-star in $\mathcal{H}_i$.
The union of the families $\mathcal{H}_i$ of type $\alpha$  is $\HH_\alpha$.
Call $\mathcal{H}_i$ (and its edges) {\em type} $\beta$ if $\mathcal{J}(\mathcal{H}_i)$ has a unique 3-subset.
Each of these edges $E\in\mathcal{H}_i$ has an element $b(E)\in E$ such that
 each set $K\subset E$ of the form $\{b(E), x\}$ ($x\in E\setminus \{ b(E)\}$) and the set
 $E\setminus \{ b(E) \}$ is a kernel of a $4\cdot \ttt \cdot h$-star in $\mathcal{H}_i$.
The union of the families $\mathcal{H}_i$ of type $\beta$
 is $\HH_\beta$.

Define the function $\alpha$ on ${[n] \choose 3}$ as follows:
Given a $3$-set $Y$, let $\alpha(Y)$ be the number of edges $E\in \HH_\alpha$ with $Y=E\setminus \{ c(E)\}$.
Define the function $\beta$  on  ${[n] \choose 3}$ as follows:
Given a $3$-set $Y$, let $\beta(Y)$ be the number of edges $E\in \HH_\beta$ with $b(E)\in Y\subset E$.

\begin{claim}\label{distinct22}
$\alpha(Y)+\frac{1}{3} \beta(Y)\leq \ttt -1$, for all $3$-subsets $Y$ of $[n]$.
\end{claim}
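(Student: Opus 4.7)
The plan mimics the sunflower arguments of Claims~\ref{distinct51} and~\ref{distinct52}, now blending the two edge types. Assume for contradiction that some $3$-set $Y=\{y_1,y_2,y_3\}$ satisfies $\alpha(Y)+\tfrac13\beta(Y)\geq\ttt$. Writing $\beta(Y)=\beta_1(Y)+\beta_2(Y)+\beta_3(Y)$, where $\beta_i(Y)$ counts type-$\beta$ edges $F\supset Y$ with $b(F)=y_i$, pigeonhole yields an index $i^\star$ with $\alpha(Y)+\beta_{i^\star}(Y)\geq\ttt$. After relabeling, take $i^\star=3$ and set $A_0:=\{y_1,y_2\}$. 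I would then collect $\ttt$ ``good'' edges $E_1,\dots,E_\ttt$, each of the form $Y\cup\{z_j\}$ with $z_j\notin Y$ pairwise distinct, where $E_j$ either belongs to $\HH_\alpha$ with $c(E_j)=z_j$, or belongs to $\HH_\beta$ with $b(E_j)=y_3$.

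The key uniformity is that $A_0\cup\{z_j\}$ is always a $3$-kernel of a $4\ttt h$-sunflower in the relevant $\mathcal H_{i_j}$: in type $\alpha$ because it contains the distinguished vertex $c(E_j)=z_j$, and in type $\beta$ because $A_0\cup\{z_j\}=E_j\setminus\{y_3\}=E_j\setminus\{b(E_j)\}$ is the unique $3$-set available in $\JJ_{i_j}$. Next, imitating Claim~\ref{distinct51}, I would verify that the $E_j$ lie in pairwise distinct $\mathcal H_{i_j}$. Two good edges in the same $\mathcal H_i$ must share the type (the $\mathcal H_i$ are typed), and a pattern analysis of $E_j\cap E_k=Y$ in the $r$-partition of $\mathcal H_i$ yields a contradiction: in type $\alpha$ every $3$-set in $\JJ_i$ contains the $c$-coordinate $4$ while $Y$ misses $X_4$; in type $\beta$ the only $3$-set $\{2,3,4\}$ omits coordinate $1$, yet $y_3=b(E_j)\in X_1\cap Y$.

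I would then run the inductive sunflower extension. Start with $D_0:=Y\cup\{z_1,\dots,z_\ttt\}$; at step $j$, extract from the sunflower in $\mathcal H_{i_j}$ around the kernel $A_0\cup\{z_j\}$ a singleton petal $p_j\notin D_{j-1}$, which exists because $4\ttt h$ petals are available while $|D_{j-1}|\leq 2\ttt+2$. Form $E_j':=A_0\cup\{z_j,p_j\}\in\mathcal H_{i_j}$ and set $B_j:=\{z_j,p_j\}$. A short case check using the homogeneity of $\JJ_{i_j}$ shows $B_j$ is a $(2,4\ttt h)$-kernel: in type $\alpha$ because $c(E_j')=z_j\in B_j$, and in type $\beta$ because $y_3\notin E_j'$ together with $r$-partiteness forces $p_j\in X_1$, so $b(E_j')=p_j\in B_j$. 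The $B_j$'s are pairwise disjoint by construction and disjoint from $A_0$, so Lemma~\ref{no+b} produces the forbidden bush $\BB_{\ttt,h}(2,2)$, a contradiction.

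The main obstacle is that the kernel structure of a type-$\beta$ edge is anchored at $b(E)\in Y$ rather than at an ``outside'' vertex as in type $\alpha$; a fixed $A_0\subset Y$ therefore captures only the type-$\beta$ edges whose $b$-vertex is the single element of $Y\setminus A_0$. This is precisely why the coefficient of $\beta(Y)$ in the claim is $\tfrac13$: it is the averaging cost over the three possible choices of $A_0\subset Y$.
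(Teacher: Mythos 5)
Your argument is correct and follows essentially the same strategy as the paper's proof: first note that the type-$\alpha$ edges (anchored at an element of $Y$ determined by pigeonhole among the three choices) and type-$\beta$ edges all lie in pairwise distinct $\mathcal{H}_i$ because otherwise $\Pi(Y)$ would have to lie in $\mathcal{J}(\mathcal{H}_i)$; then take $A_0=Y\setminus\{y_0\}$ (your $\{y_1,y_2\}$) and run the inductive petal-extraction of Claim~\ref{distinct52} to build pairwise disjoint $(2,4\ttt h)$-kernels $B_1,\dots,B_\ttt$ contradicting Lemma~\ref{no+b}. The only difference is cosmetic ordering (you pigeonhole before the distinctness check; the paper does the reverse), and you spell out the type-$\beta$ kernel verification via $r$-partiteness, which the paper leaves implicit by referring to Claim~\ref{distinct52}.
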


\begin{proof} For brevity, let $\alpha=\alpha(Y)$ and $\beta=\beta(Y)$.
Let $E_1\in \mathcal{H}_{i_1}, \ldots, E_\alpha\in \mathcal{H}_{i_\alpha}$ be the $\alpha$ distinct edges $E$ with
  $E\in \HH_\alpha$ and $Y=E\setminus \{ c(E)\}$ and
let $E_{\alpha+1}\in \mathcal{H}_{i_{\alpha+1}}, \ldots, E_{\alpha+\beta}\in \mathcal{H}_{i_{\alpha+\beta}}$ be the $\beta$ distinct edges $E$ with $E\in \HH_\beta$ and $b(E)\in Y \subset E$.
If $i_j=i_{j'}$ for some $j\neq j'$,
 then the intersection structure of $\mathcal{H}_{i_j}$ would contain the $3$-set $Y$, a contradiction.
Thus $i_1, i_2,\ldots$ are all distinct. By relabelling we may suppose that $E_i\in \mathcal{H}_i$.

Suppose  that $\alpha+\frac{1}{3} \beta> \ttt -1$, so  $\alpha + \lceil \beta/3 \rceil \geq \ttt $.
Since $|Y|=3$, one can find an element $y_0\in Y$ such that $b(E_j)=y_0$ at least $\lceil \beta/3 \rceil$ times.
So we may suppose that there are $\ttt $ distinct $E_i\in \mathcal{H}_{i}$
  such that the elements $c(E_1), \dots, c(E_\alpha)$ and $d(E_j):= E_j\setminus Y$ for $\alpha < j\leq \ttt $ are all distinct
  and  $E_i=Y\cup \{ c(E_i)\}$, $b(E_j)=y_0$.

Let $A_0:= Y\setminus \{ y_0\}$. Then $|A_0|=2$ and we can find $\ttt +1$ disjoint sets $A_0, B_1,B_2,\ldots,B_\ttt $ contradicting Lemma~\ref{no+b} using induction in the same way as we did in the proof of Claim~\ref{distinct52}.
\end{proof}

Claim~\ref{distinct22} implies
\begin{equation*} 
(\ttt -  1)|\partial \HH| \geq \sum\nolimits_{Y\in {[n]\choose 3}} 
\left( \alpha(Y)+\frac{1}{3}\beta(Y)\right)=  |\HH_\alpha|+ |\HH_\beta|.
  \end{equation*}
This, together with~\eqref{eq:41}  contradicts~\eqref{eq:12} and thus completes the proof of Theorem~\ref{th:main} in this case.

\subsection{The case $(a,b)=(r-1,1)$}\label{ss43}

Call $\mathcal{H}_i$ (as above) of {\em type} $\alpha$ if $\mathcal{J}(\mathcal{H}_i)$ has $r-1$ $(r-1)$-subsets.
Each of these edges $E\in\mathcal{H}_i$ has an element $c(E)\in E$ such that
 each proper subset of $E$ containing $c(E)$ is a kernel of a $\ttt hr$-star in $\mathcal{H}_i$.
The union of these families  $\mathcal{H}_i$ is $\HH_\alpha$.
Call $\mathcal{H}_j$ (and its edges) {\em type} $\beta$ if $\mathcal{J}(\mathcal{H}_j)$ has no $(r-1)$-subset.
Note that each element $y$ of an edge $E\in\mathcal{H}_j$
 is a kernel of a $\ttt hr$-star in $\mathcal{H}_j$.
The union of these $\mathcal{H}_j$ families is $\HH_\beta$.

As in the previous subsections,  for each $Y\in {[n] \choose r-1}$ let
 $\alpha(Y)$ be the number of edges $E\in \HH_\alpha$ with $Y=E\setminus \{ c(E)\}$.
The definition of $\beta$ on  ${[n] \choose r-1}$ is even simpler:
  $\beta(Y)$ is the number of edges $E\in \HH_\beta$ with $Y\subset E$.
If $\alpha(Y)+\beta(Y)> \ttt -1$, then taking $A_0:=Y$ and $B_i:=E_i\setminus Y$, 
 each $B_i$ is a kernel of a large star,
contradicting Lemma~\ref{no+b}. Therefore, $\alpha(Y)+\beta(Y)\leq \ttt -1$ for each $Y\in {[n] \choose r-1}$, and
\begin{equation*} 
(\ttt -  1)|\partial \HH| \geq \sum_{Y\in {[n]\choose r-1}} \left( \alpha(Y)+\beta(Y)\right)=  |\HH_\alpha|+ r|\HH_\beta|\geq |\HH|,
  \end{equation*}
contradicting~\eqref{eq:12}.

\section{Hypergraphs without a bush $\BB_{\ttt, 1  }(1,r-1)$}\label{s5}

We will prove this part for  $C=C(r,\ttt ) :=  1/c(r,q)$, where
 $q:= 8r\ttt^2$
 and $c$ is from Lemma~\ref{Zoll}.
Suppose  $\mathcal{H}$ is a counter-example with the fewest edges. So
 $\Hh\subset {[n]\choose r}$, it is bush-free and $|\Hh|-|\partial \Hh|$ satisfies the lower bound~\eqref{eq:12}.
In particular,  $|\mathcal{H}|> C n^{r-2}$.

Call an $r$-graph $\GG$ {\em $\ttt $-normal} if it has no $(r-1)$-tuples of vertices whose  codegree is positive but less than $\ttt $.
 If 
 $\mathcal{H}$ is not $\ttt $-normal, then choose an $(r-1)$-tuple $Y$ of vertices whose  codegree is positive but less than $\ttt $ and
 let $\mathcal{H}'$ be obtained from $\mathcal{H}$ by deleting the
  edges containing  $Y$. Then $|\mathcal{H}'|-(\ttt -  1) |\partial \mathcal{H}'|\geq |\mathcal{H}|-(\ttt -  1) |\partial \mathcal{H}|>C n^{r-2}$, so $\mathcal{H}'$ satisfies~\eqref{eq:12} and is 
  $\BB_{\ttt, 1  }(1,r-1)$-free. This contradicts the minimality of $|\mathcal{H}|$. 
From now on, we suppose that $\mathcal{H}$ is $\ttt $-normal.

For every edge $Y\in\mathcal{H}$ and any $u\in Y$, let $Q(Y,u)=\{z\in V(\mathcal{H})\setminus 
 Y: Y\setminus \{u\}\cup \{z\}\in \mathcal{H}\}$.
Since $\mathcal{H}$ is $\ttt $-normal, $|Q(Y,u)|\geq \ttt -1$
 for every edge $Y\in \mathcal{H}$ and every  $u\in Y$.
For each  $u\in Y\in \Hh$, fix a subset
$Q'(Y,u)$ of $Q(Y,u)$ with $|Q'(Y,u)|=\min\{s,|Q(Y,u)|\}.$
Call a subfamily $\PP'\subset \Hh$ with $u\in \bigcap \PP'$ {\em separable} if $(\bigcup_{P\in \PP} Q'(P,u))\cap (\bigcup \PP')=\emptyset$.

\begin{claim}\label{separable}
Suppose that $u$ is a kernel of a star $\PP$ in $\mathcal H$, i.e.,
$\PP\subset \Hh$ such that
 $P_1\cap P_2 = u$ for all $P_1,P_2\in \PP$ whenever $P_1\neq P_2$.
Then there exists a separable $\PP'\subset \PP$ 
with $|\PP'|\geq |\PP|/(2s+1)$.
\end{claim}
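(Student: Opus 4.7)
The plan is to translate the claim into a question about a large independent set in an auxiliary graph on $\PP$ whose degeneracy comes directly from the single-vertex sunflower kernel.

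First, I would form a directed graph $D$ with vertex set $\PP$ by drawing an arc $P\to P''$ exactly when $P\neq P''$ and $Q'(P,u)\cap P''\neq\emptyset$. The decisive observation is that the out-degree of each $P$ in $D$ is at most $s$. Indeed, $Q'(P,u)\subseteq V(\Hh)\setminus P$, so $u\notin Q'(P,u)$; and because $\PP$ is a sunflower with kernel $\{u\}$, the petals $\{P''\setminus\{u\}:P''\in\PP\}$ are pairwise disjoint, so every vertex $z\in Q'(P,u)$ lies in at most one member of $\PP\setminus\{P\}$. Hence the number of out-neighbors of $P$ in $D$ is at most $|Q'(P,u)|\leq s$.

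Next, let $G$ be the underlying simple undirected graph of $D$. Each edge of $G$ comes from at least one arc of $D$, so $|E(G)|\leq |A(D)|\leq s|\PP|$, and the average degree of $G$ is at most $2s$. By the Caro--Wei inequality (or, equivalently, the greedy algorithm that repeatedly deletes a vertex of minimum degree together with its neighbors) $G$ contains an independent set $\PP'\subseteq\PP$ with $|\PP'|\geq |\PP|/(2s+1)$, and $u\in\bigcap\PP'$ is inherited from $\PP$.

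Finally, I would verify separability. For any $P,P''\in\PP'$ (possibly equal): if $P=P''$ then $Q'(P,u)\cap P''=\emptyset$ because $Q'(P,u)\subseteq V(\Hh)\setminus P$; if $P\neq P''$, then the non-edge $\{P,P''\}$ in $G$ forbids both arcs in $D$, and in particular $Q'(P,u)\cap P''=\emptyset$. Therefore $\bigl(\bigcup_{P\in\PP'}Q'(P,u)\bigr)\cap\bigcup\PP'=\emptyset$, so $\PP'$ is separable. The only delicate point is the out-degree bound on $D$, which relies essentially on the sunflower structure (ensuring the petals $P''\setminus\{u\}$ are pairwise disjoint and miss $u$); once that is in hand, the rest is standard degeneracy bookkeeping.
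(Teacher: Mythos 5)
Your proof is correct and follows essentially the same route as the paper: the same auxiliary digraph on $\PP$ with arcs $P\to P''$ when $Q'(P,u)\cap P''\neq\emptyset$, the same out-degree bound $\leq s$ from the disjointness of the petals, and then an independent set of size $|\PP|/(2s+1)$ (the paper cites $2s$-degeneracy and $(2s+1)$-colorability where you invoke the average-degree/Caro--Wei bound, an immaterial difference). Your explicit verification that a non-edge forbids both arcs, hence gives separability, is exactly the intended final step.
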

\begin{proof} Let $G$ be the auxiliary directed graph with vertex set $\PP$ where the pair $\{ P_1, P_2\}\subset \PP$ is an arc if  
 $P_2\cap Q'(P_1,u)\neq \emptyset$. Let $G'$ be the underlying undirected graph of $G$.

Since $Q'(P,u)$ can meet at most $\ttt$ members of $\PP$, the
outdegree of each vertex  in $G$ is at most $\ttt$. So $G'$ is $2\ttt$-degenerate and hence $(2\ttt+1)$-colorable.
In particular,  $G'$ has an independent set of size at least $|\PP|/(2s+1)$.
An independent set in $G'$ corresponds to a separable subfamily of $\PP$.
\end{proof}

\begin{claim}\label{qstar}
Suppose that $u$ is a center of a separable star $\PP'$ in $\mathcal H$.
If $|\PP'|\geq r+2s-2$ then
 there exists a unique $(\ttt-1)$-element set $T(u)$ such that
 $Q(P, u)=T(u)$ for all $P\in \PP'$.\newline
 Moreover, $Q(Y,u)=T(u)$ for each $u\in Y\in \Hh$ with $Y\cap T(u)=\emptyset$. I.e.,
\begin{equation}\label{eq:51}
Y\setminus \{ u\} \cup \{ z\}\in \Hh \text{ for all }z\in T(u)\cup \{ u\}.
  \end{equation}
Let $T^+(u):= T(u)\cup \{ u\}$.
Then $T^+(z)$ is defined for each $z\in T^+(u)$ and coincides with $T^+(u)$.
\end{claim}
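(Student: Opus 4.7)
The strategy is to establish the three assertions in order.

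For the existence and uniqueness of $T(u)$, I will recast bush-freeness as a matching condition. Form the bipartite graph $G$ with parts $\PP'$ and $T':=\bigcup_{P\in\PP'}Q'(P,u)$, with $P\sim z$ iff $z\in Q'(P,u)$. A matching of size $s$ in $G$ would give $s$ edges $P_{i_1},\dots,P_{i_s}$ of $\PP'$ and distinct extensions $z_1,\dots,z_s$ with $z_j\in Q'(P_{i_j},u)\subseteq V\setminus\bigcup\PP'$ (by separability), so $\{u\}$, the petals $P_{i_j}\setminus\{u\}$, and the singletons $\{z_j\}$ are pairwise disjoint and give a copy of $\BB_{s,1}(1,r-1)$. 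Bush-freeness therefore provides a K\"onig vertex cover $C=C_{\PP}\cup C_T$ of size $\leq s-1$. Since $|\PP'|\geq r+2s-2>s-1\geq|C_{\PP}|$, the set $\PP'\setminus C_{\PP}$ is nonempty, and any $P$ in it satisfies $Q'(P,u)\subseteq C_T$; combined with $|Q'(P,u)|\geq s-1$ this forces $|C_T|=s-1$, $C_{\PP}=\emptyset$, and $Q'(P,u)=C_T$ for every $P\in\PP'$. Define $T(u):=C_T$. Since $|Q'(P,u)|=\min(s,|Q(P,u)|)$, we deduce $|Q(P,u)|=s-1$ and $Q(P,u)=T(u)$, also giving uniqueness.

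For the \emph{moreover}, take $Y\in\Hh$ with $u\in Y$ and $Y\cap T(u)=\emptyset$. I would first show $Q(Y,u)\subseteq T(u)$ by a bush construction. If some $w\in Q(Y,u)\setminus T(u)$ existed, then since $|Y\setminus\{u\}|=r-1$ and $w$ each hit at most one petal of $\PP'$, at most $r$ members of $\PP'$ need be avoided; from $|\PP'|\geq r+2s-2$ we can pick $P_1,\dots,P_{s-1}$ whose petals $P_i\setminus\{u\}$ miss $Y\cup\{w\}$. Enumerate $T(u)=\{t_1,\dots,t_{s-1}\}$ and choose $z_i:=t_i$ as the extension of $P_i$ and $z_s:=w$ as the extension of $Y$; Part~A together with separability and the hypothesis $Y\cap T(u)=\emptyset$ supply all required disjointness to realize a $\BB_{s,1}(1,r-1)$, contradicting bush-freeness. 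The reverse containment is immediate from $s$-normality: the codegree of $Y\setminus\{u\}$ equals $|Q(Y,u)\cup\{u\}|$ and is at least $s$, while $Q(Y,u)\cup\{u\}\subseteq T^+(u)$ has size $s$, so $Q(Y,u)=T(u)$.

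For the final statement, fix $z\in T(u)$ (the case $z=u$ is trivial) and set $\PP^*_z:=\{(P\setminus\{u\})\cup\{z\}:P\in\PP'\}$. Each member lies in $\Hh$ because $z\in T(u)=Q(P,u)$, and they form a star of size $|\PP'|\geq r+2s-2$ through $z$. The crucial computation is $Q(P^z,z)=T^+(u)\setminus\{z\}$: by Part~A, the edge-extensions of $P\setminus\{u\}$ in $\Hh$ are exactly the $s$ vertices of $T^+(u)=T(u)\cup\{u\}$, so removing $z$ (already in $P^z$) leaves a set of size $s-1$, which therefore coincides with $Q'(P^z,z)$. This set avoids $\bigcup\PP^*_z=\{z\}\cup\bigcup_P(P\setminus\{u\})$, since $z$ has been removed and $T^+(u)$ meets no petal (by separability of $\PP'$ and $u\notin\bigcup_P(P\setminus\{u\})$). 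Hence $\PP^*_z$ is itself a separable star at $z$ of the right size, and Part~A applied to it yields $T(z)=T^+(u)\setminus\{z\}$, so $T^+(z)=T^+(u)$.

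The main technical subtlety I anticipate is juggling $Q$ against $Q'$: separability is formulated via the chosen subsets $Q'$, but bush constructions need information about the full set $Q$. What makes everything fit is that Part~A pins $|Q(P,u)|$ at exactly $s-1$, so $Q'=Q$ along the entire star; this is precisely what lets separability propagate through the swap $u\mapsto z$ in Part~C without any loss of edges.
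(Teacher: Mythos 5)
Your proof is correct, and while the overall arc (separability forces either an SDR/matching giving a bush, or a forced structure on $Q(\cdot,u)$) mirrors the paper's, the tools differ at each step in a way worth noting. In Part~A you phrase the obstruction via K\"onig's theorem and a small vertex cover; the paper instead observes that $s$-normality gives $|Q'(P_i,u)|\geq s-1$ so Hall's condition can only fail if $|\bigcup_{i\leq s}Q'(P_i,u)|\leq s-1$ for every $s$-tuple, which directly pins all the $Q'(P,u)$ to a common $(s-1)$-set. The two are dual and land in the same place, though the paper's version is a line shorter since it never needs to name the cover. In Part~B the paper does not build a bush by hand: it discards the (at most $r-1+s$) members of $\PP'$ meeting $(Y\setminus\{u\})\cup Q'(Y,u)$, forms a separable star of size $s$ out of $Y$ and $s-1$ survivors, and re-applies the Part~A bound to that star; your explicit construction with extensions $t_1,\dots,t_{s-1},w$ is an equally valid (and perhaps more concrete) route to the same contradiction. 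Where your write-up genuinely sharpens the paper is Part~C: the paper asserts that $\{(P\setminus\{u\})\cup\{z\}\}$ is a separable star with a parenthetical giving only the inclusion $T^+(u)\setminus\{z\}\subset Q(P^z,z)$, whereas your computation that $Q(P^z,z)$ is \emph{exactly} $T^+(u)\setminus\{z\}$ — because Part~A fixes the extensions of $P\setminus\{u\}$ to be precisely $T^+(u)$ — forces $|Q(P^z,z)|=s-1$, hence $Q'(P^z,z)=Q(P^z,z)$, and makes separability transparent rather than implicit. That closing observation, that equality of $Q$ and $Q'$ is what lets separability survive the swap $u\mapsto z$, is exactly the point left tacit in the paper.
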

\begin{proof}
If there are $\ttt$ members of $\PP'$, say $P_1, \dots, P_\ttt$ such that $|\bigcup_{1\leq i\leq \ttt} Q'(P_i, u)| \geq \ttt$, then the family $\{Q'(P_1, u)$,...,$Q'(P_\ttt,u)\}$ satisfies Hall's condition. So, there exists a set of distinct representatives 
$\{z_1, \dots, z_\ttt\}$ such that $z_i\in Q'(P_i, u)$ for $1\leq i\leq \ttt$. Then the sets
$P_1, \dots, P_\ttt$ together with the hyperedges $P_1\setminus \{ u\}\cup \{ z_1\}, \dots,
  P_\ttt\setminus \{ u\}\cup \{ z_\ttt\}$
form a bush $\BB_{\ttt, 1  }(1,r-1)$ with central element $u$, a contradiction.

Hence $|\bigcup_{1\leq i\leq \ttt} Q'(P_i, u)| \leq \ttt-1$
 for any $\ttt$ distinct $P_1, \dots, P_\ttt\in \PP'$.
This implies $|Q'(P_i,u)|=\ttt -1$ for all $i$, so $|Q(P_i,u)|=\ttt -1$.
It also follows that $Q(P_i, u)=Q(P_j, u)$ for $1\leq i\leq j\leq \ttt$.
This holds for any pair from $\PP'$, so we get $Q(P, u)=Q(P_1, u)$ for all $P\in \PP'$.
Define $T(u):= Q(P_1,u)$.

Consider  $Y\in \Hh$ with $u\in Y$ and $Y\cap T(u)=\emptyset$. The set 
$(Y\setminus \{ u\})\cup Q'(Y,u)$ can meet at most $(r-1+\ttt)$ members of $\PP'$.
Since $|\PP'|\geq (r-1)+(2s-1)$  we can still find $P_1, \dots, P_{\ttt-1}\in \PP'$ such that
  $P_1, \dots, P_{\ttt-1}$ and $Y$ form a separable star.
This yields  $Q(Y, u)=T(u)$ and we are done.

The uniqueness of $T(u)$ also follows from the fact that having a $T_2(u)$ with similar properties one can find a $P\in \PP'$ avoiding it, so $T_2(u)=Q(P,u) =T(u)$.

To prove the last statement, choose $z\in T(u)$.  Equation~\eqref{eq:51} implies that the family
$\{ P\setminus \{u\} \cup \{ z\} :P\in \PP'\}$  is a separable star (we have $T^+(u)\setminus \{ z\} \subset Q(P\setminus \{u\} \cup \{ z\}, z)$).
So the first part of Claim~\ref{qstar} implies that $Q(P\setminus \{u\} \cup \{ z\}, z)= T^+(u)\setminus \{ z\} = T(z)$.
\end{proof}

Do the following procedure. Apply Lemma~\ref{Zoll} for $\Hh$ to get $\mathcal{H}_1=\mathcal{H}^*$
 with the corresponding intersection structure $\mathcal{J}_1\subset 2^{[r]}$.
For $i=1,2,\ldots$,  if
$|\mathcal{H}\setminus \bigcup_{j=1}^i \mathcal{H}_j|\leq C\cdot n^{r-2}$, then stop, let $m:=i$
and $\HH:=\bigcup_{j=1}^i \mathcal{H}_j$ and $\mathcal{H}_0=\mathcal{H}\setminus \HH$; otherwise,
let $\mathcal{H}_{i+1}:=(\mathcal{H}\setminus \bigcup_{j=1}^i \mathcal{H}_j)^*$.
We have $|\mathcal{H}_1|>n^{r-2}$ because $|\Hh|> Cn^{r-2}$ and by the choice of $C$. Similarly,
$|\Hh_i|> n^{r-2}$ for each $1\leq i\leq m$.
By Lemmas~\ref{noa+b} and~\ref{inter},  $\mathcal{J}_i$ contains a family isomorphic to $ \mathcal{J}^{(r-1)}$, or
 to $ \mathcal{J}^{(0)}$. In both cases, $\mathcal{J}_i$ contains a singleton, so $\Hh_i$ contains
 $q$-stars with singleton kernels. 

{\bf Case 1.} There exists a $\mathcal{J}_i$ containing a family isomorphic to $ \mathcal{J}^{(r-1)}$.
\newline
We may assume that $\mathcal{J}_i$ contains all proper subsets of $[r]$ containing $1$ and $X_1, \dots, X_r$ are the parts of $\Hh_i$.
There is an element $y_1\in X_1$ such that $\{y_1\}$ is a kernel of a $q$-star in $\Hh_i$.
Claims~\ref{separable} and~\ref{qstar} imply the existence of $T(y_1)$ since $\frac{q}{2s+1}\geq r+2s-2$.
We can choose a $Y_1=\{y_1,\ldots,y_r\}\in \mathcal{H}_i$ where $y_j\in X_j$ for $j\in [r]$ with
 $Y\cap T(y_1)=\emptyset$.

Since $\{1,2\}\in \mathcal{J}_i$, $\{y_1, y_2\}$ is the kernel of a $2\ttt$-star $Y_1, \dots, Y_{2\ttt}\in \Hh_i$ (i.e, the sets $Y_j\setminus \{y_1, y_2\}$ are pairwise disjoint for $j\in [2\ttt]$). 
At most $\ttt -1$ of them intersect $T(y_1)$. So we may assume, e.g.,
$Y_1,\ldots,Y_\ttt $ are disjoint from $T(y_1)$.
Since $Y_j\setminus \{ y_2\}$ is a kernel of a $q$-star for each $j\in [\ttt]$,
 one can find distinct elements $x_1, \dots, x_\ttt$ from $X_2$ such that none of them lies in $T(y_1)$ and $Y_j\setminus \{ y_2\} \cup \{ x_j\}\in \Hh_i$.
Let $T^+(y_1):= T(y_1)\cup \{ y_1\}=\{ z_1, \dots, z_\ttt \}$ and
apply~\eqref{eq:51} for $Y_j\setminus \{ y_2\} \cup \{ x_j\}$.
We obtain that $Y_j\setminus \{y_1, y_2\} \cup \{ z_j, x_j\}\in \Hh$.

Apply~\eqref{eq:51} for $Y_j$ (again with $T^+(y_1)$).
We get that $Y_j\setminus \{ y_1\}\cup \{z_j \} \in \Hh$.
These edges, together with the edges $Y_j\setminus \{y_1, y_2\} \cup \{ z_j, x_j\}$ form a bush
 $\BB_{\ttt,1  }(1,r-1)$ with
 the central vertex $y_2$.
This contradiction leads to the last case.

{\bf Case 2.}
$\mathcal{J}_i$ contains a family isomorphic to $ \mathcal{J}^{(0)}$ for each $1\leq i\leq m$.

By the structure of $ \mathcal{J}^{(0)}$, each $v\in V(\mathcal{H}_i)$ is a kernel 
of a $q$-star in $\Hh_i$. So by Claims~\ref{separable} and~\ref{qstar} for each $u\in \bigcup \HH$,
  $T^+(u)$ is well defined  and $|T^+(u)|=s$.

Since $ \mathcal{J}^{(0)}$ does not contain sets of size $r-1$, each $(r-1)$-element set $Y\in \partial \mathcal{H}_i$
is only in one set in $\mathcal{H}_i$, thus $|\partial \mathcal{H}_i| =r|\mathcal{H}_i|$. 
As $|\mathcal{H}_0|\leq C n^{r-2}$ equation~\eqref{eq:12} implies $|\HH|>(\ttt -  1)  |\partial \mathcal{H}| \geq (\ttt -  1)  |\partial \HH|$.
We obtain $\sum_i |\partial \mathcal{H}_i|= r\sum_i |\mathcal{H}_i| = r|\HH|>r(\ttt -  1) |\partial \HH|$.
Hence some $(r-1)$-tuple $S$ belongs to at least $r(\ttt -  1)+1$ shadow families $\partial\mathcal{H}_i$.
Say, $S\cup \{z_i\}\in \mathcal{H}_i$ for $i\in [r(\ttt -  1)+1]$.
Let $Z=\{z_1,\ldots,z_{r(\ttt -  1)+1}\}$.

For every $y\in S$,   $|T(y)|= \ttt -1$;   
 thus there is $z_j\in Z\setminus \bigcup_{y\in S} T^+(y)$, say $z_1$.
Let $Y_1:= S\cup \{ z_1\}$.
We got that $T(z_1)$ is disjoint from $Y_1$ while $z_1\in Y_1\in \Hh_1$. So Claim~\ref{qstar} implies that $Q(Y_1,z_1)=T(z_1)$. However $Q(Y_1,z_1)$ contains $Z\setminus \{ z_1\}$.

This final contradiction implies that the minimal counterexample $\Hh$ does not exist, completing the proof of Theorem~\ref{th:main}.

\section{Stability: Proof of Theorem~\ref{th:stab}}\label{stabi}

Let $\mathcal{H}$ be an $n$-vertex $r$-uniform family not containing
 a bush  $\BB_{\ttt  ,h}(a,b)$ with
 \begin{equation}\label{eq:60'}
 |\mathcal{H}|> (\ttt -  1){n\choose {r-1}}-C_0 n^{r-2}.
   \end{equation}
Define $C$,  $m$, $\Hh_0,\ldots,\Hh_m$ and $\HH$ as in Subsection~\ref{41}. By~\eqref{eq:60'} and the definition of $\Hh_0$,
\begin{equation}\label{eq:61}
|\HH|    \geq (\ttt -  1) {n\choose {r-1}}-(C+C_0)n^{r-2}.
   \end{equation}

As in Subsection~4.2, for each $1\leq i\leq m$, the intersection structure $\mathcal{J}(\mathcal{H}_i)$ contains $ \mathcal{J}^{(r-1)}$. So, again
 for each hyperedge $E\in \mathcal{H}_i\subset \HH $ ($1\leq i\leq m$) there is an element $c(E)\in E$ such that each proper subset of $E$ containing $c(E)$ is a kernel of an $\ttt hr$-star in  $\mathcal{H}_i$.

For $v\in [n]$, let $\Hh(v)=\{E\in \HH: v=c(E)\}$ and $\GG(v)=\{E\setminus \{v\}: E\in \Hh(v)\}$.  Let $\GG=\bigcup_{v\in [n]}\GG(v)$.
By Claim~\ref{distinct51},
$|\GG(v)|=|\HH(v)|$ for each $v\in [n]$; in particular, $\sum_{v} |\GG(v)|=|\HH|$.
Furthermore, Claim~\ref{distinct52} implies that 
\begin{equation}\label{eq:62}
\mbox{\em each  $(r-2)$-subset $Y$ of $[n]$ is in the shadow of at most $\ttt -1$ families  $\GG(v)$.
}
   \end{equation}
Recall the  Lov\' asz form of the Kruskal-Katona Theorem:
\begin{theorem}\label{th:LKK}
If $x$ is a positive real, $1\leq k<n$, $\FF\subseteq {[n]\choose k}$ and $|\FF|= {x\choose k}$, then
$|\partial \FF|\geq {x\choose k-1}$.
\end{theorem}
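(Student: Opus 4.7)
The setup in Section~\ref{stabi} already identifies the useful skeleton: writing $\Hh(v)=\{E\in\HH:c(E)=v\}$ and $\GG(v)=\{E\setminus\{v\}:E\in\Hh(v)\}$, we have $|\HH|=\sum_v|\GG(v)|$, and applying \eqref{eq:62} to $(r-2)$-subsets of $[n]$ gives the shadow bound $\sum_v|\partial\GG(v)|\leq(\ttt-1)\binom{n}{r-2}$. My plan is to feed these two inequalities into the Lov\'asz form of Kruskal--Katona (Theorem~\ref{th:LKK}) and deduce that at least $\ttt-1$ of the families $\GG(v)$ must be essentially complete.

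Concretely, I write $|\GG(v)|=\binom{x_v}{r-1}$ for a real $x_v\in[r-1,n]$, so that Theorem~\ref{th:LKK} yields $|\partial\GG(v)|\geq\binom{x_v}{r-2}$, hence $\sum_v\binom{x_v}{r-2}\leq(\ttt-1)\binom{n}{r-2}$. Using the elementary identity $\binom{x}{r-1}=\frac{x-r+2}{r-1}\binom{x}{r-2}$, one can expand
\[
\sum_v\binom{x_v}{r-1}=\frac{n-r+2}{r-1}\sum_v\binom{x_v}{r-2}-\frac{1}{r-1}\sum_v(n-x_v)\binom{x_v}{r-2}.
\]
Combining this expansion with $|\HH|\geq(\ttt-1)\binom{n}{r-1}-(C+C_0)n^{r-2}$ from \eqref{eq:61} and the shadow estimate pins down two facts: $\sum_v(n-x_v)\binom{x_v}{r-2}\leq(r-1)(C+C_0)n^{r-2}$, and $\sum_v\binom{x_v}{r-2}$ is within $O(n^{r-3})$ of $(\ttt-1)\binom{n}{r-2}$.

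Now fix $K:=4(r-1)!(C+C_0)$ and let $H:=\{v:x_v\geq n-K\}$; I claim $|H|\geq\ttt-1$ for $n$ large. Suppose instead $|H|\leq\ttt-2$. The trivial bound $\binom{x_v}{r-2}\leq\binom{n}{r-2}$ for $v\in H$, combined with the second fact above, yields $\sum_{v\notin H}\binom{x_v}{r-2}\geq\binom{n}{r-2}-O(n^{r-3})$. Splitting $\sum_v\binom{x_v}{r-1}$ according to $H$ and using $\frac{x_v-r+2}{r-1}\leq\frac{n-K-r+2}{r-1}$ for $v\notin H$ then gives
\[
|\HH|=\sum_v\binom{x_v}{r-1}\leq(\ttt-1)\binom{n}{r-1}-\frac{K}{r-1}\sum_{v\notin H}\binom{x_v}{r-2}\leq(\ttt-1)\binom{n}{r-1}-\frac{K}{(r-1)!}n^{r-2}(1-o(1)),
\]
which contradicts \eqref{eq:61} once $n\geq n_0$ by the choice of $K$. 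Hence $|H|\geq\ttt-1$, and for each $v\in H$ we have $|\GG(v)|\geq\binom{n-K}{r-1}\geq\binom{n}{r-1}-K\binom{n}{r-2}\geq\binom{n}{r-1}-C_1 n^{r-2}$ with $C_1:=K/(r-2)!$. Since the degree of $v$ in $\Hh$ is at least $|\Hh(v)|=|\GG(v)|$, these $\ttt-1$ vertices satisfy the conclusion. The delicate point is the tight use of the identity $\binom{x}{r-1}=\frac{x-r+2}{r-1}\binom{x}{r-2}$: the shadow estimate and the edge lower bound simultaneously force $\sum_v\binom{x_v}{r-2}$ to nearly saturate $(\ttt-1)\binom{n}{r-2}$ while keeping the deficits $n-x_v$ small on average, which is exactly what converts a soft Kruskal--Katona inequality into concentration on $\ttt-1$ heavy vertices.
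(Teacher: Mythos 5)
Your submission does not prove Theorem~\ref{th:LKK} at all --- it \emph{uses} it. The statement you were asked to establish is the Lov\'asz form of the Kruskal--Katona theorem: a general, self-contained claim about an arbitrary $k$-uniform family $\FF\subseteq{[n]\choose k}$ with $|\FF|={x\choose k}$, asserting $|\partial\FF|\geq{x\choose k-1}$. What you have written instead is an argument for the paper's stability result (Theorem~\ref{th:stab}): you take the decomposition $\GG(v)$, invoke ``the Lov\'asz form of Kruskal--Katona (Theorem~\ref{th:LKK})'' to get $|\partial\GG(v)|\geq{x_v\choose r-2}$, and then run an averaging argument to locate $\ttt-1$ heavy vertices. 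As a proof of the stated theorem this is circular: the very first substantive step assumes the conclusion you are supposed to derive. (As a side remark, your averaging argument is essentially the same computation the paper performs in Lemma~\ref{d53}, where \eqref{eq:64x} is multiplied by $(y-r+2)/(r-1)$ and added to \eqref{eq:63x}; so even the part you did write is not new, but that is not the assignment.)

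A genuine proof of Theorem~\ref{th:LKK} must work for an arbitrary family $\FF$ and make no reference to bushes, to the families $\GG(v)$, or to \eqref{eq:61}--\eqref{eq:62}. The standard route is a double induction on $k$ and $|\FF|$: pick an element $v$ of maximum degree in $\FF$, split $\FF$ into the link $\FF_v=\{F\setminus\{v\}:v\in F\in\FF\}$ and the remainder $\FF_{\bar v}=\{F\in\FF:v\notin F\}$, show by an averaging/counting argument that $|\FF_v|\geq{x-1\choose k-1}$ (otherwise $|\FF|<{x\choose k}$ via the real-variable Pascal identity ${x\choose k}={x-1\choose k}+{x-1\choose k-1}$ and the monotonicity of ${x\choose k}$), and then bound $|\partial\FF|\geq|\FF_v|+|\{\{v\}\cup G:G\in\partial\FF_v\}|\geq{x-1\choose k-1}+{x-1\choose k-2}={x\choose k-1}$ using the induction hypothesis on $\FF_v$. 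The paper itself offers no proof --- it recalls the theorem as classical --- but your task was to supply one, and the submission does not contain the required argument in any form.
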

Here ${x\choose k}$ is a non-negative real convex function defined as a degree $k$ polynomial
 $x(x-1)\dots (x-k+1)/k!$ for $x\geq k-1$ and $0$ otherwise.

For every $j\in [n]$, there a real $x_j$ such that $|\GG(j)|={x_j\choose r-1}$.
Inequality~\eqref{eq:61} gives
\begin{equation}\label{eq:63x}
   \sum_i {x_i\choose r-1} \geq (\ttt -  1) {n\choose {r-1}}-(C+C_0)n^{r-2},
   \end{equation}
and~\eqref{eq:62} and Lov\'asz theorem  give
\begin{equation}\label{eq:64x}
   (\ttt-1){n\choose r-2} \geq  \sum_i |\partial\GG(j)| \geq \sum_i {x_i\choose r-2}.
   \end{equation}
 Let $b=3 (r-1)!(C+C_0)$.

\begin{lemma}\label{d53} Suppose that $n\geq r^2$ 
and  $x_1\geq x_2\geq \ldots \geq x_n$. Then
inequalities~\eqref{eq:63x} and~\eqref{eq:64x} concerning $n,r, x_1, \dots, x_n$ imply
 $x_1, \dots, x_{\ttt-1} > n-b$.
\end{lemma}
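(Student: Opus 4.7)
I would argue by contradiction: assume $x_{\ttt-1}\le n-b$, so $x_i\le n-b$ for every $i\ge \ttt-1$ by monotonicity. The driving observation is the identity
\[
\binom{x}{r-1}=\frac{x-r+2}{r-1}\binom{x}{r-2},
\]
which makes $\binom{x}{r-1}/\binom{x}{r-2}$ strictly increasing in $x$. Thus, once many of the $x_i$ are bounded away from $n$, the ``$g$-budget'' allowed by~\eqref{eq:64x} buys a strictly smaller $f$-sum than~\eqref{eq:63x} requires.

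Concretely, split the indices at $\ttt-1$ and set $T=\sum_{i\le \ttt-2}\binom{x_i}{r-2}$, $S=\sum_{i\ge \ttt-1}\binom{x_i}{r-2}$. Since $\GG(i)$ consists of $(r-1)$-subsets of $[n]\setminus\{i\}$, each $x_i\le n-1$, so $T\le (\ttt-2)\binom{n-1}{r-2}$, while~\eqref{eq:64x} gives $T+S\le (\ttt-1)\binom{n}{r-2}$. Applying the identity with $x_i\le n-1$ on the top block and $x_i\le n-b$ on the tail yields
\[
\sum_i\binom{x_i}{r-1}\le \frac{n-r+1}{r-1}T+\frac{n-b-r+2}{r-1}S=\frac{n-r+1}{r-1}(T+S)-\frac{b-1}{r-1}S.
\]
Because the coefficient of $S$ is negative, the right-hand side is maximized when $T$ and $T+S$ sit at their upper bounds, i.e., $T=(\ttt-2)\binom{n-1}{r-2}$ and $T+S=(\ttt-1)\binom{n}{r-2}$; this forces $S\ge \binom{n-1}{r-2}+(\ttt-1)\binom{n-1}{r-3}$.

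Substituting these extremal values and simplifying via $\binom{n}{r-1}=\tfrac{n-r+2}{r-1}\binom{n}{r-2}$ together with Pascal's rule, the bound collapses to
\[
\sum_i\binom{x_i}{r-1}\le (\ttt-1)\binom{n}{r-1}-\frac{b+\ttt-2}{r-1}\binom{n-1}{r-2}+O_r(n^{r-3}).
\]
For $n\ge r^2$ one has $(n-1)(n-2)\cdots(n-r+2)\ge (1-1/r)^{r-2}n^{r-2}\ge n^{r-2}/e$, hence $\binom{n-1}{r-2}\ge n^{r-2}/(e(r-2)!)$; so the deficit on the right is at least $\tfrac{b+\ttt-2}{e(r-1)!}\,n^{r-2}+O_r(n^{r-3})$. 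With $b=3(r-1)!(C+C_0)$ the factor $3/e>1$ makes this strictly larger than $(C+C_0)n^{r-2}$, contradicting~\eqref{eq:63x}. The main technical wrinkle is the calibration of the constant $3$ in the definition of $b$: it must be chosen so that $3/e>1$ still leaves enough margin to absorb the lower-order $O_r(n^{r-3})$ remainder uniformly for every $n\ge r^2$. Beyond this bookkeeping, the proof is a direct combination of Kruskal--Katona (via~\eqref{eq:64x}) and the convexity identity above.
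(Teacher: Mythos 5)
Your proof is correct in spirit and takes a genuinely different route from the paper's, so let me compare them and also flag a couple of small issues.

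The paper does not split the indices at a fixed threshold and does not argue by contradiction. Instead, it sets $y:=x_{\ttt}$ and adds $\frac{y-r+2}{r-1}$ times~\eqref{eq:64x} to~\eqref{eq:63x}. After substituting ${x\choose r-1}=\frac{x-r+2}{r-1}{x\choose r-2}$ on both sides, the inequality rearranges into
\[
(C+C_0)n^{r-2}\ \geq\ \sum_{i\ge\ttt}\tfrac{y-x_i}{r-1}\tbinom{x_i}{r-2}\;+\;\sum_{i<\ttt}\tfrac{n-x_i}{r-1}\tbinom{n}{r-2}\;+\;\sum_{i<\ttt}\tfrac{x_i-y}{r-1}\bigl(\tbinom{n}{r-2}-\tbinom{x_i}{r-2}\bigr),
\]
where the first and third sums are visibly nonnegative; dropping them leaves $\sum_{i<\ttt}(n-x_i)\le (r-1)(C+C_0)n^{r-2}/\binom{n}{r-2}<b$ for $n\ge r^2$, and each summand $n-x_i$ is nonnegative, giving the conclusion termwise. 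Your approach instead freezes the threshold at $n-b$, sets up the two-variable linear program in $T=\sum_{i\le\ttt-2}\binom{x_i}{r-2}$ and $S=\sum_{i\ge\ttt-1}\binom{x_i}{r-2}$, and maximizes explicitly. Both proofs hinge on the same identity $\binom{x}{r-1}=\frac{x-r+2}{r-1}\binom{x}{r-2}$ and on the estimate $\binom{n}{r-2}$ (or $\binom{n-1}{r-2}$) $>n^{r-2}/(3(r-2)!)$ for $n\ge r^2$, which is where the constant $3$ in the definition of $b$ enters. The paper's version is a bit shorter because the multiplier $\frac{y-r+2}{r-1}$ is chosen so that two blocks of terms become nonnegative automatically and disappear, whereas you have to work out the LP extremum.

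Two small remarks on your write-up. First, the ``technical wrinkle'' about an $O_r(n^{r-3})$ remainder is actually a non-issue: carrying the extremal substitution through exactly, the bound is
\[
\sum_i\binom{x_i}{r-1}\ \le\ (\ttt-1)\binom{n}{r-1}-\frac{b+\ttt-2}{r-1}\binom{n-1}{r-2}-\frac{b(\ttt-1)}{r-1}\binom{n-1}{r-3},
\]
so the lower-order term has a favorable sign and there is nothing left to absorb; the line ``this forces $S\ge\dots$'' should also read ``$S\le\dots$'' (it is the value of $S$ at the maximizer). Second, your step $\binom{x_i}{r-1}\le\frac{n-b-r+2}{r-1}\binom{x_i}{r-2}$ for $i\ge\ttt-1$ tacitly uses $n-b-r+2\ge 0$: for $x_i\in[r-3,r-2)$ the left side is $0$ by the paper's convention while the right side would be negative if $n<b+r-2$. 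Since the lemma assumes only $n\ge r^2$ and $b$ may be much larger than $r^2$, you should either assume $n\ge b+r-2$ (harmless in the application, where $n>n_0$) or note that when $n<b+r-2$ the conclusion is easier to check directly. The paper's version avoids this because its multiplier $\frac{y-r+2}{r-1}$ is nonnegative by the (separately handled) assumption $y>r-2$, and no comparison with $n-b$ is needed inside the inequality chain.
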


\begin{proof} Pure college algebra. For brevity let $y:=x_{\ttt}$.
The case $y\leq r-2$ is obvious (in this case the left hand side of~\eqref{eq:63x} has at most $\ttt-1$
nonzero terms), so from now on we suppose that $y> r-2$.
Multiply~\eqref{eq:64x} by $(y-r+2)/(r-1)$ and add it to~\eqref{eq:63x}.
Using ${x\choose r-1} = (x-r+2)/(r-1) \times {x\choose r-2}$ (for all reals $x> r-2\geq 1$)
after rearrangements we get
\begin{eqnarray*}
(C+C_0)n^{r-2}&\geq& \sum_{i\geq \ttt} \frac{y-x_i}{r-1}{x_i\choose r-2}\\
  &&+ \sum _{1\leq i \leq \ttt-1}\frac{n-x_i}{r-1}{n\choose r-2}\\
  &&+ \sum _{1\leq i \leq \ttt-1}\frac{x_i-y}{r-1}\left( {n\choose r-2}-{x_i\choose r-2} \right).
\end{eqnarray*}
The first and the third rows are non-negative. We obtain
$$ (r-1)(C+C_0)n^{r-2} / {n\choose r-2} \geq \sum_{1\leq i\leq \ttt-1} (n-x_i).
  $$
So the left hand side is at most $b$ for $n\geq r^2$.
\end{proof}

Now we are ready to finish the proof of the theorem. By Lemma~\ref{d53}, for every $1\leq i\leq \ttt -1$,
$$|\HH(i)|={x_i\choose r-1}\geq {n-b\choose r-1} 
\geq {n\choose r-1}-b{n\choose r-2}\geq {n\choose r-1}-\frac{b}{(r-1)!}n^{r-2}.
$$
This completes the proof of Theorem~\ref{th:stab} for $C_1=\frac{b}{(r-1)!}=3(C+C_0)$.

\paragraph{Acknowledgment.} We thank a referee for helpful comments.

\small

\end{document}